\documentclass[12pt,a4paper,twoside]{amsart}
\usepackage{amsmath,amsfonts,amsthm,amsopn,color,amssymb,enumitem}
\usepackage{palatino}
\usepackage{graphicx}
\usepackage[colorlinks=true]{hyperref}
\hypersetup{urlcolor=blue, citecolor=red, linkcolor=blue}
\usepackage[left=2.61cm,right=2.61cm,top=2.72cm,bottom=2.72cm]{geometry}

\usepackage[colorinlistoftodos]{todonotes}
\makeatletter
\providecommand\@dotsep{5}
\def\listtodoname{List of Todos}
\def\listoftodos{\@starttoc{tdo}\listtodoname}
\makeatother

\newcommand{\e}{\varepsilon}
\newcommand{\eps}{\varepsilon}

\newcommand{\R}{\mathbb{R}}

\newcommand{\RN}{{\mathbb{R}^N}}

\newcommand{\de}{\partial}

\renewcommand{\le}{\leslant}
\renewcommand{\ge}{\geslant}
\renewcommand{\a }{\alpha }

\renewcommand{\b }{\beta }

\renewcommand{\d }{\delta }

\newcommand{\g }{\gamma }

\renewcommand{\l }{\lambda}
\renewcommand{\ln }{\lambda_n}
\newcommand{\n }{\nabla }
\newcommand{\s }{\sigma }
\renewcommand{\t}{\theta}
\renewcommand{\O}{\Omega}

\newcommand{\Il}{I_{\l}}
\newcommand{\Iln}{I_{\l_n}}

\renewcommand{\H}{H^1(\RN)}
\newcommand{\Hr}{H^1_r(\RN)}

\newcommand{\Ne}{\mathcal{N}}

\newcommand{\N}{\mathbb{N}}

\newcommand{\bw}{\textbf{w}}

\newcommand{\irn }{\int_{\RN}}

\def\bbm[#1]{\mbox{\boldmath $#1$}}
\newcommand{\beq }{\begin{equation}}
\newcommand{\eeq }{\end{equation}}

\renewcommand{\le}{\leqslant}
\renewcommand{\ge}{\geqslant}
\newcommand{\dis}{\displaystyle}

\newcommand{\ird }

\newtheorem{theorem}{Theorem}[section]
\newtheorem{lemma}[theorem]{Lemma}

\newtheorem{proposition}[theorem]{Proposition}
\newtheorem{remark}[theorem]{Remark}
\newtheorem{corollary}[theorem]{Corollary}

\title[On Some Scalar Field Equations with Competing Coefficients]{On Some Scalar Field Equations with Competing Coefficients}

\author[G. Cerami]{Giovanna Cerami}
\address{Dipartimento di Meccanica, Matematica e Management,
Politecnico di Bari
\newline\indent
Via Orabona 4,  70125  Bari, Italy}
\email{giovanna.cerami@poliba.it}

\author[A. Pomponio]{Alessio Pomponio}
\address{Dipartimento di Meccanica, Matematica e Management,
Politecnico di Bari
\newline\indent
Via Orabona 4,  70125  Bari, Italy}
\email{alessio.pomponio@poliba.it}

\thanks{Work  supported by G.N.A.M.P.A. of I.N.D.A.M.
A. Pomponio is supported by G.N.A.M.P.A. Project ``Analisi variazionale di modelli fisici non lineari''}

\begin{document}

\maketitle

\begin{abstract}
This paper deals with semilinear elliptic problems of the type
\[ 
\left\{
        \begin{array}{ll}
 -\Delta u+\alpha(x)u= \beta (x)|u|^{p-1}u      \quad     \hbox{in }\R^N, 
\\
 u(x)>0\quad\hbox{in }  \R^N, \qquad u \in H^1(\R^N),         
        \end{array}
\right.
\]
 where $p$ is superlinear but subcritical and the coefficients $\alpha$ and $\beta$ are positive functions such that $\alpha(x) \to a_\infty > 0$ and  
$\beta(x)\to  b_\infty > 0$, as $|x| \to \infty$. 
Aim of this work is to describe some phenomena that can occur when the coefficients are ``competing''. 

\end{abstract}

\section{Introduction}

In this paper we consider superlinear elliptic problems of the type
\beq\label{P}\tag{$\mathcal{P}$} 
\left\{
        \begin{array}{ll}
 -\Delta u+\alpha(x)u= \beta (x)|u|^{p-1}u      \quad     \hbox{in }\R^N, 
\\
 u(x)>0\quad\hbox{in }  \R^N, \qquad u \in H^1(\R^N),         
        \end{array}
\right.
\eeq
 where $N \ge2$, $p>1,  \ p<2^* -1 ={N+2\over N-2},$ if $N\ge 3,$ and the coefficients $\alpha$ and $\beta$ are positive functions such that $\lim_ {|x| \to \infty}\alpha(x)=  a_\infty > 0$ and  $\lim_ {|x| \to \infty}\beta(x)= b_\infty > 0.$
 
The interest in studying \eqref{P}  comes from  its strong connections with Mathematical Physics and with problems in biology. The most known related question is probably the search of some solitary waves in nonlinear equations of the Klein-Gordon or Schr\"odinger type, but Euclidean scalar field equations appear also in several other contexts, like nonlinear optics, laser propagation, population dynamics, constructive field theory (see for instance \cite{BL1,Strauss1,S2}). It is worth also observing that  another strong motivation for the researchers attention is the challenging feature of \eqref{P}. Indeed, in spite of its variational nature, a lack of compactness, due to the invariance of $\R^N$ under the action of the noncompact group of translations, prevents a straight application of  the usual variational methods.

 Starting from the pioneering papers \cite{BL1,Strauss2}, several existence and multiplicity results have been stated as well as qualitative properties of the solutions to \eqref{P} have been studied.  The earliest results  were obtained in radially symmetric situations, taking advantage of the compact embedding in $L^P (\R^N),\ p\in (2, 2N/(N-2))$ of the subspace of $H^1(\R^N)$ consisting of radial functions. 
 On the contrary, when the coefficients do not enjoy symmetry, many different devices have been exploited to face the difficulties and to obtain the desired solutions. Describing all the various and interesting contributions in this direction, without forgetting something, is not an easy matter.  Thus, we prefer to focus the attention just on those  results more related to the subject of the present paper and refer readers, who are interested in a more detailed description of the research development,  to some survey papers \cite{C1,C2} and references therein. 

 When one considers the question of the existence of solutions of \eqref{P} in the non-symmetric case, first observation is that the topological situation and, then, the variational tools to be used are different  according the way in which $\alpha$ and $\beta$ approach their limit at infinity. When $\alpha(x)\rightarrow a_\infty$ from below and $\beta(x)\to b_\infty$ from above, as $|x| \to +\infty$, the existence of a positive ground state solution to \eqref{P} can be shown by  using a minimization method together concentration-compactness type arguments (see for example \cite{L,R}). Conversely, if $\alpha(x)\rightarrow a_\infty$ from above and  $\beta(x)\rightarrow b_\infty$ from below, \eqref{P} may not have a least energy solution. This is the case, for instance, when $\alpha(x) = a_\infty, \  \beta(x)\le b_\infty $ and  $\beta(x)\neq b_\infty $ on a positive measure set. Nevertheless, it is well known that also these situations can be successfully handled (see \cite{BL,BaLio}). Indeed,  adding to the previous conditions the  assumption on $b_\infty -\beta (x)$ of a suitable exponential ``fast" decay, it is possible to show  that a positive, not ground state, solution exists, by using minimax arguments together with delicate topological tools and a deep study of the nature of the obstacles to the compactness.

It is worth observing that most results concern cases, as those above described, in which the coefficients $\alpha$ and $\beta$ act on \eqref{P} in a ``cooperative'' way, even if in \cite{BaLio} some statements and comments including also different situations can be found.

 Purpose of this paper is  to describe some phenomena that can occur when the coefficients are ``competing''. In order to describe our work and results, let us first write \eqref{P}, the coefficients, and the assumptions on them, in a more appropriate way to our aim.  

We set
$$ \alpha(x) = a_\infty + \lambda a(x), \qquad  \beta(x) = b_\infty +  b(x)$$
 where $\lambda \in \R^+ $  and we assume

\begin{enumerate} [label=($H_\arabic{*}$), ref=$H_\arabic{*}$]
 \item \label{it:h1}$a\in L^{N/2}(\R^N)$, $a\ge 0$, $a\neq 0,$ and   $\lim_{|x|\to \infty}a(x)=0$;
 \item \label{it:h2}$b\in L^\infty(\R^N)$, $b\ge 0$, $b\neq 0,$ and   $\lim_{|x|\to \infty}b(x)=0$.
\end{enumerate}

\noindent Therefore, we rewrite \eqref{P}  as
\beq\label{eql}\tag{$\mathcal{P}_\l$}
\left\{
\begin{array}{ll}
-\Delta u +\big(a_\infty+\l a(x)\big)u=\big(b_\infty+ b(x)\big)|u|^{p-1}u, &\hbox{in }\R^N,
\\
u\in H^1(\R^N).
\end{array}
\right.
\eeq

As before reported, if $\l = 0,$ $(\mathcal{P}_\l)$ admits a positive ground state solution, corresponding to a solution of the minimization problem:
$$  m_\l:= \min \left\{ I_\l (u) : u \in H^1(\R^N)\setminus \left\{0\right\}, I'_\l(u) [u] = 0 \right\},$$
where $I_\l : H^1(\R^N)\to \R$ is the functional defined as
\beq\label{funzio}
I_\l(u)
=\frac {1}{2} \irn |\nabla u|^2+\big(a_\infty+\l a(x)\big)u^2
-\frac{1}{p+1}\irn\big(b_\infty+b(x)\big)|u|^{p+1}.
\eeq
Clearly, for  $\l$ suitably close to zero one expects that the situation does not change, but it is a natural question to wonder under which assumptions, when $\l$ increases, the minimum persists and, on the contrary, when it can be lost by the increasing competing effect of $a_\infty + \l a(x)$ against $b_\infty +  b(x).$ In this paper we present some contribution to this subject. 

Once introduced, in Section \ref{se:pre}, the variational framework and collected some useful facts and relations, we study, in Section \ref{se:ml}, the behaviour of the map $\l \rightarrow m_\l$. We show that it is a non decreasing, bounded, continuous map, and from its properties we deduce  that if a value $\bar{\l}$  exists for which the ground solution does not exist, then, \eqref{eql} does not admit a ground state solution also for  all $\l > \bar{\l}$. Therefore, the set of $\l$'s for which \eqref{eql} has not a ground state solution can be either empty set or an half-line. 

Then, in Section \ref{se:proofs}, we show that both the above considered cases can occur, according to the decay of $a(x)$ and $b(x),$ and, moreover, we prove that, 
also when the ground state solution does not exist, if $a(x)$ decays in a suitable fast way, the existence of a positive, not ground state, solution to \eqref{eql} can be obtained.  

The results in this direction can be summarized in the two following theorems. The first one concerns a situation in which the decay rate of $a(x)$ is faster than that  of $b(x),$ then, whatever $\l\in \R^+$ is, a ground state solution to \eqref{eql} exists.

\begin{theorem}
\label{th:<}
Suppose that \eqref{it:h1} and \eqref{it:h2} hold and that, 
\beq\label{ABforte1} \tag{$H_3$}
\irn a(x) e^{\a\sqrt{a_\infty}|x|}<+\infty \quad \hbox{ and } \quad
\lim_{|x|\to +\infty}b(x) e^{\b\sqrt{a_\infty}|x|}\ge c>0,
\eeq 
with $\b <\min\{2,\a\}$.
%
Then, for all $\l \in \R^+,$ \eqref{eql} admits a  ground state positive solution.	
\end{theorem}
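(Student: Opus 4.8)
The plan is to reduce the existence of a ground state to the strict energy inequality $m_\l<m_\infty$, where $m_\infty$ denotes the ground state level of the autonomous \emph{problem at infinity} $-\Delta u+a_\infty u=b_\infty|u|^{p-1}u$ obtained by letting $|x|\to\infty$ in \eqref{eql} (note that it does not depend on $\l$). Indeed, by the concentration--compactness / splitting analysis collected in the preliminary sections, a minimizing Palais--Smale sequence for $I_\l$ on the Nehari manifold either converges strongly, producing a ground state, or splits off at least one ``bubble'' carrying energy $\ge m_\infty$, in which case $m_\l\ge m_\infty$. Since the inequality $m_\l\le m_\infty$ always holds (translate the limit ground state to infinity), the whole matter reduces to showing that it is \emph{strict} for every $\l\in\R^+$: once this is known, no bubble can form, the weak limit is a nontrivial critical point realizing $m_\l$, and replacing it by its modulus together with the maximum principle yields a positive ground state.

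To prove strictness I would use as test family the translates $U_y:=U(\cdot-y)$, $|y|\to\infty$, of the radial, positive, exponentially decaying ground state $U$ of the problem at infinity, and evaluate $I_\l$ after projecting onto the Nehari manifold, i.e. estimate $\max_{t>0}I_\l(tU_y)$. Writing $A(U_y)=\irn|\nabla U_y|^2+(a_\infty+\l a)U_y^2$ and $B(U_y)=\irn(b_\infty+b)|U_y|^{p+1}$, a direct computation gives
\[
\max_{t>0}I_\l(tU_y)=m_\infty\,\frac{(1+\delta_a)^{\frac{p+1}{p-1}}}{(1+\delta_b)^{\frac{2}{p-1}}},\qquad
\delta_a=\frac{\l\irn a\,U_y^2}{A_\infty},\quad \delta_b=\frac{\irn b\,|U_y|^{p+1}}{B_\infty},
\]
where $A_\infty=\irn|\nabla U|^2+a_\infty U^2$ and $B_\infty=b_\infty\irn|U|^{p+1}$ are the autonomous quantities (and the identity uses $U\in\mathcal N_\infty$). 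Both $\delta_a,\delta_b\to0$ as $|y|\to\infty$, and expanding shows the bracket equals $1-\tfrac{2}{p-1}\delta_b+\tfrac{p+1}{p-1}\delta_a+O(\delta_b^2)$, which is $<1$ for $|y|$ large as soon as the \emph{favourable} term $\delta_b$ dominates the \emph{unfavourable} term $\delta_a$, i.e. $\delta_a=o(\delta_b)$.

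The heart of the argument is therefore a pair of sharp exponential estimates. For the $b$--term I would use $b(x)\ge\tfrac{c}{2}e^{-\b\sqrt{a_\infty}|x|}$, valid for $|x|$ large by \eqref{ABforte1}; since $U_y^{p+1}$ concentrates near $x=y$ with $|y|\to\infty$ only large $|x|$ matter, and bounding $|z+y|\le|y|+|z|$ after the change of variables $z=x-y$ gives $\irn b\,|U_y|^{p+1}\ge C\,e^{-\b\sqrt{a_\infty}|y|}$. For the $a$--term I would use $U^2(z)\le Ce^{-2\sqrt{a_\infty}|z|}$ together with the weight splitting
\[
e^{-2\sqrt{a_\infty}|x-y|}\le e^{-\min\{2,\a\}\sqrt{a_\infty}|y|}\,e^{\min\{2,\a\}\sqrt{a_\infty}|x|}\,e^{-2\theta\sqrt{a_\infty}|x-y|},\qquad 2(1-\theta)=\min\{2,\a\},
\]
obtained from $|x-y|\ge|y|-|x|$. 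Dropping the last factor and using $\min\{2,\a\}\le\a$, the growing exponential is absorbed into the finite weight $\irn a\,e^{\a\sqrt{a_\infty}|x|}<\infty$, whence $\l\irn a\,U_y^2\le C_\l\,e^{-\min\{2,\a\}\sqrt{a_\infty}|y|}$. Since \eqref{ABforte1} assumes $\b<\min\{2,\a\}$, we get $\delta_a/\delta_b\lesssim e^{-(\min\{2,\a\}-\b)\sqrt{a_\infty}|y|}\to0$, hence $m_\l<m_\infty$ for every $\l$.

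I expect the main obstacle to be precisely these competing exponential estimates: the $\l a$--term raises the energy while the $b$--term lowers it, and the gain must beat the loss for \emph{all} $\l\in\R^+$ (the threshold $|y|$ may depend on $\l$, which is harmless). The weight splitting in the $a$--integral, where both conditions $\a>\b$ and $2>\b$ enter, is the delicate point; once $\delta_a=o(\delta_b)$ is secured, the reduction to compactness via the splitting lemma of the earlier sections is routine.
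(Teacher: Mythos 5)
Your proposal is correct and follows essentially the same route as the paper: both reduce the theorem to the strict inequality $m_\l<m_\infty$ (so that concentration--compactness, Proposition \ref{pr:22'}, yields a positive minimizer), and both establish strictness by testing with Nehari projections of far translates $w_{y}$ of the limit ground state, showing the favourable $b$-term of order $e^{-\b\sqrt{a_\infty}|y|}$ beats the unfavourable $a$-term of order $e^{-\min\{2,\a\}\sqrt{a_\infty}|y|}$ since $\b<\min\{2,\a\}$. The only cosmetic differences are that you bound $\irn a\,U_y^2$ by an elementary triangle-inequality weight splitting where the paper invokes its asymptotic Lemma \ref{riccardo}, and you expand $\max_{t>0}I_\l(tU_y)$ as an explicit ratio rather than using the paper's inequality $I_\infty(t_n w_{y_n})\le I_\infty(w_{y_n})=m_\infty$.
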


On the other hand, the second statement apply to the opposite case, that is when the decay rate of  $a(x)$ is slower or equal to that  of $b(x).$

\begin{theorem}
\label{th:seconda}

Suppose that \eqref{it:h1} and \eqref{it:h2} hold and that 
there exists $\s \in (0,\sqrt{a_\infty})$ such that
\beq\label{ABforte2}\tag{$H_4$}
\lim_{|x|\to +\infty}a(x) e^{\a\sqrt{\s}|x|}\ge c>0 \quad \hbox{ and } \quad
\irn b(x) e^{\b\sqrt{\s}|x|}<+\infty,
\eeq 
with $\a\le\min\{p+1,\b\}$.
%
Then a number $\l^*>0 $ exists such that, for all $\l < \l^*,$ \eqref{eql} admits a ground state positive solution, while  if $\l > \l^*,$ \eqref{eql} has no ground state solution.

Furthermore, if the additional condition is satisfied
\beq\label{it:h3}\tag{$H_5$}
\irn a(x) |x| ^{N-1}e^{2\sqrt{a_\infty}|x|}<+\infty ,
\eeq
\eqref{eql} has, even for $\l \in [\l^*, +\infty)$, a positive solution. 
\end{theorem}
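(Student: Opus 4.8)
The plan is to work on the Nehari manifold and to compare the ground state level $m_\l$ with the ground state level $m_\infty$ of the problem at infinity $-\Delta u+a_\infty u=b_\infty|u|^{p-1}u$, whose unique positive radial ground state $w$ decays like $w(x)\sim|x|^{-(N-1)/2}e^{-\sqrt{a_\infty}|x|}$. Two facts from Sections~\ref{se:pre} and~\ref{se:ml} drive the argument: translating $w$ to infinity and projecting onto the Nehari manifold of $I_\l$ gives $m_\l\le m_\infty$ for every $\l$, while a minimizing sequence for $m_\l$ is relatively compact --- so that a ground state exists --- precisely when the strict inequality $m_\l<m_\infty$ holds. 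Hence the first statement reduces to locating the set $\{\l:m_\l<m_\infty\}$. At $\l=0$ the linear coefficient equals $a_\infty$ while the nonlinear one is $b_\infty+b(x)\ge b_\infty$ with $b\neq0$, so projecting $w$ yields $m_0<m_\infty$; combined with the monotonicity, boundedness and continuity of $\l\mapsto m_\l$ from Section~\ref{se:ml}, this shows $\{\l:m_\l<m_\infty\}$ is an interval $[0,\l^*)$, and it remains only to prove $\l^*<+\infty$.

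To show $m_\l=m_\infty$ for $\l$ large I would argue by contradiction: if $m_\l<m_\infty$ for a sequence $\l_n\to+\infty$, then ground states $u_n$ exist, and the Nehari identity $m_{\l_n}=(\tfrac12-\tfrac1{p+1})\int_{\RN}\big(|\nabla u_n|^2+(a_\infty+\l_n a)u_n^2\big)$ together with $m_{\l_n}\le m_\infty$ bounds $\l_n\int_{\RN}a\,u_n^2$ and forces $\int_{\RN}a\,u_n^2\le C/\l_n\to0$. Since $a\ge0$ vanishes only at infinity, the bumps $u_n$ cannot stay where $a>0$ and must escape, so up to translation $u_n\approx w(\cdot-y_n)$ with $|y_n|\to+\infty$. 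The decisive comparison is then between the energy-raising term $\l_n\int_{\RN}a(x)w(x-y_n)^2$ and the energy-lowering term $\int_{\RN}b(x)|w(x-y_n)|^{p+1}$. Using \eqref{ABforte2}, so that $a(x)\gtrsim e^{-\a\sqrt{\s}|x|}$ and $b(x)\lesssim e^{-\b\sqrt{\s}|x|}$, together with the decay of $w$, a saddle-point estimate gives the rates $\int_{\RN}a\,w(\cdot-y_n)^2\sim e^{-\min\{\a\sqrt{\s},\,2\sqrt{a_\infty}\}|y_n|}$ and $\int_{\RN}b\,|w(\cdot-y_n)|^{p+1}\sim e^{-\min\{\b\sqrt{\s},\,(p+1)\sqrt{a_\infty}\}|y_n|}$; since $\a\le\b$ and $\a\le p+1$ while $\s<a_\infty$ and $p>1$, the first rate is no larger than the second, so the ratio $\l_n\int_{\RN}a\,w(\cdot-y_n)^2/\int_{\RN}b\,|w(\cdot-y_n)|^{p+1}\to+\infty$. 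A comparison with the Nehari projection onto $I_\infty$ then gives $I_{\l_n}(u_n)\ge m_\infty$, contradicting $m_{\l_n}<m_\infty$. I expect the genuine difficulty to be here: making these exponential asymptotics rigorous and uniform, controlling the Nehari projections, and justifying the replacement of $u_n$ by a sharp translate of $w$ --- the topological bookkeeping around $\l^*$ being comparatively soft.

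For the last statement --- a positive, non-ground-state solution when $\l\ge\l^*$, where compactness is lost exactly at the level $m_\infty$ --- I would follow the minimax scheme of \cite{BL,BaLio}. Because the $a$-term raises the energy and, by \eqref{ABforte2}, dominates the $b$-term, the projected single bumps $y\mapsto\Phi_\l(y)$ (the Nehari projection of $w(\cdot-y)$) satisfy $I_\l(\Phi_\l(y))>m_\infty$ with $I_\l(\Phi_\l(y))\to m_\infty$ as $|y|\to+\infty$; one then constructs, via a barycenter map detecting the non-contractible ``hole at infinity'', a minimax class whose value $c_\l$ lies strictly in $(m_\infty,2m_\infty)$. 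In that range the only obstructions to the Palais--Smale condition --- a single bump escaping to infinity (level $m_\infty$) and a splitting into two bumps (level $\ge 2m_\infty$) --- are ruled out, so a Palais--Smale sequence at $c_\l$ converges to a nontrivial critical point, positive by the maximum principle. Here condition \eqref{it:h3}, namely $\int_{\RN}a(x)|x|^{N-1}e^{2\sqrt{a_\infty}|x|}<+\infty$, is exactly what guarantees the convergence of the integral governing the leading correction $\int_{\RN}a(x)w(x-y)^2$, pinning its order to $w(y)^2$; this precise order is what certifies the two strict inequalities $m_\infty<c_\l<2m_\infty$ on which the whole construction rests. Verifying these strict energy estimates, and the compactness of Palais--Smale sequences below $2m_\infty$, is the part I expect to be the most delicate in this second half.
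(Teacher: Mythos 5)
Your strategy coincides, in both halves, with the paper's own. Part one: reduce, via the monotonicity, boundedness and continuity of $\l\mapsto m_\l$ (Section \ref{se:ml}, Propositions \ref{pr:mbar}, \ref{pr:l*}, \ref{pr:cont}) and the compactness criterion of Proposition \ref{pr:22'}, to showing $\l^*<+\infty$; prove this by contradiction along a diverging sequence $(\l_n)_n$ whose ground states escape to infinity, and close with a comparison of exponential rates (this is exactly Proposition \ref{pr:=}). Part two: a Bahri--Lions-type barycenter minimax in the energy range $(m_\infty,2m_\infty)$, where the Palais--Smale condition holds by the splitting Lemma \ref{le:comp} (Corollary \ref{co:24bis}) and \eqref{it:h3} controls the $a$-correction against the interaction term (Lemmas \ref{le:mB}, \ref{le:B<=max}, \ref{le:M2m}, \ref{le:max<B}). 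One small correction to your accounting in part two: \eqref{it:h3} is responsible only for the upper strict inequality $\mathcal{T}_\rho^\l<2m_\infty$; the lower one, $m_\infty<\mathcal{B}_0^\l$, comes from Ekeland's principle plus the splitting lemma when $m_\l=m_\infty$ is not attained, and needs no decay hypothesis.

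The one step of your sketch that would fail as written is the rate comparison in part one. You replace $u_n$ by $w_{y_n}$ and then invoke the sharp decay $e^{-\sqrt{a_\infty}|x|}$ of $w$ to estimate $\irn b\,|u_n|^{p+1}$, arriving at the rate $\min\{\b\sqrt{\s},(p+1)\sqrt{a_\infty}\}$. But $H^1$-proximity of $u_n$ to $w_{y_n}$ gives no pointwise control of the tails of $u_n$, and it is precisely the tails that enter these weighted integrals; moreover no estimate available here gives the translated minimizers the sharp rate $\sqrt{a_\infty}$ uniformly in $n$. The paper's resolution is to work with $v_n=u_n(\cdot+y_n)$, upgrade $v_n\to w$ to local $C^2$ convergence by Schauder estimates, and then apply Lemma \ref{le:exp}, which yields only $|v_n(x)|\le Ce^{-\sqrt{\s}|x|}$ for the given $\s<a_\infty$; this is exactly why hypothesis \eqref{ABforte2} is calibrated at the rate $\sqrt{\s}$ rather than $\sqrt{a_\infty}$. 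With this weaker but rigorous decay one gets $\irn b(x+y_n)|v_n|^{p+1}\le c\,e^{-\min\{p+1,\b\}\sqrt{\s}|y_n|}$ via Lemma \ref{riccardo} (note also that \eqref{ABforte2} gives only integrability of $b\,e^{\b\sqrt{\s}|\cdot|}$, not the pointwise bound $b(x)\lesssim e^{-\b\sqrt{\s}|x|}$ you use), while the lower bound $\irn a(x+y_n)v_n^2\ge c\,e^{-\a\sqrt{\s}|y_n|}$ needs only the convergence of $v_n$ on $B_1$. Since $\a\le\min\{p+1,\b\}$ and $\l_n\to+\infty$, this contradicts the inequality forced by the Nehari projections onto $\Ne_\infty$. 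Your mixed rates involving $\sqrt{a_\infty}$ are thus both unobtainable and unnecessary; once they are replaced by the uniform $\sqrt{\s}$-rates, your argument becomes the paper's proof.
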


Lastly we observe that, if $a(x)$ and $b(x)$ enjoy radial symmetry, a multiplicity result can be obtained as well as some information about the nature of the radial solution (whose existence comes for all $\l$ just assuming \eqref{it:h1} and \eqref{it:h2}). 

\begin{theorem}
\label{th:rad2} Suppose that $a(x)$ and $b(x)$ are radially symmetric functions and that \eqref{it:h1} and \eqref{it:h2} hold. Assume moreover either  
\eqref{ABforte1} or \eqref{ABforte2}-\eqref{it:h3}. Then, there exists a number $\tilde{\l} >0$ such that, for all $\l > \tilde{\l}, $ 
 \eqref{eql} admits at least two positive solutions, and  the one that is radially symmetric  is not  ground state.

\end{theorem}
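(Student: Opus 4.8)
The plan is to exhibit the two solutions through two different variational schemes and then to keep them apart by an energy estimate. The \emph{radial} solution will be produced by minimising $I_\l$ on the radial Nehari manifold, where compactness is automatic; the \emph{second} solution is the one already constructed in Section~\ref{se:proofs} (the ground state of Theorem~\ref{th:<} under \eqref{ABforte1}, or the bound state of Theorem~\ref{th:seconda} for $\l>\l^*$ under \eqref{ABforte2}--\eqref{it:h3}). The whole difficulty is to show that, for $\l$ large, these two critical points live at different levels, the radial one being strictly the higher.

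First I would fix the radial framework. Setting $m_\l^r:=\inf\{I_\l(u):u\in\Hr\setminus\{0\},\ I_\l'(u)[u]=0\}$, the compact embedding $\Hr\hookrightarrow L^{p+1}(\RN)$ makes $I_\l|_{\Hr}$ satisfy the Palais--Smale condition at every level, so $m_\l^r$ is attained by some $u_\l^r$; replacing it with $|u_\l^r|$ and invoking the maximum principle we may take $u_\l^r>0$, and since $a,b$ are radial the functional is $O(N)$--invariant, so by the principle of symmetric criticality $u_\l^r$ solves \eqref{eql}. This yields, for every $\l\in\R^+$ and using only \eqref{it:h1}--\eqref{it:h2}, the radial solution of the statement.

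The core of the proof is the comparison. Because $\Hr\subset\H$ one has $m_\l^r\ge m_\l$ for every $\l$, so it is enough to prove the strict separation
\beq\label{plan-gap}
\limsup_{\l\to+\infty}m_\l\ \le\ m_\infty\ <\ \liminf_{\l\to+\infty}m_\l^r,
\eeq
where $m_\infty$ is the ground state level of the limit problem $-\Delta u+a_\infty u=b_\infty|u|^{p-1}u$. The left inequality is the soft half: testing with translates $U(\,\cdot-Re_1)$ of the limit ground state and letting $R\to+\infty$ switches off both $\l a$ and $b$, so the Nehari--rescaled energies converge to $m_\infty$ and $m_\l\le m_\infty$ for all $\l$. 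For the right inequality I would argue by contradiction. If $m_{\l_n}^r\to L\le m_\infty$ with $\l_n\to+\infty$, the minimisers $u_n:=u_{\l_n}^r$ are bounded in $\H$ and, reading the Nehari identity, $\l_n\irn a\,u_n^2\le C$, hence $\irn a\,u_n^2\to0$. Along a weak limit $u_n\weakto u_*$ the embedding $\Hr\hookrightarrow L^{p+1}$ gives $u_n\to u_*$ in $L^{p+1}$, while the weak continuity of $u\mapsto\irn a\,u^2$ on $\H$ — here \eqref{it:h1}, $a\in L^{N/2}$, is decisive — yields $\irn a\,u_*^2=0$. The Nehari relation then forces $u_*\neq0$ (otherwise $\|u_n\|\to0$), so $u_*$ is a nontrivial radial function vanishing on $\{a>0\}$ and solving the limit equation only off that set. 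It remains to check that the energy of such a confined $u_*$ is strictly larger than $m_\infty$, contradicting $L\le m_\infty$. (When $a>0$ a.e.\ one even gets $m_\l^r\to+\infty$, since then $u_*\equiv0$ is forced.)

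Granting \eqref{plan-gap}, the conclusion follows for $\l$ beyond a suitable $\tilde\l$. Under \eqref{ABforte1} the ground state $u_\l$ of Theorem~\ref{th:<} satisfies $I_\l(u_\l)=m_\l<m_\infty<m_\l^r=I_\l(u_\l^r)$, so $u_\l$ is \emph{not} radial, $u_\l\neq u_\l^r$, and the radial solution is not a ground state. Under \eqref{ABforte2}--\eqref{it:h3} with $\l>\l^*$ no ground state exists, so $u_\l^r$ is a fortiori not one; comparing its level (diverging, or at least above $m_\infty$) with the level of the bound state of Theorem~\ref{th:seconda}, which stays close to $m_\infty$, separates the two solutions. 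I expect the genuine obstacle to be exactly the right--hand inequality in \eqref{plan-gap}: the soft bound $m_\l^r\ge m_\l$ only gives $\liminf m_\l^r\ge m_\infty$, and upgrading it to a \emph{strict} gap requires ruling out that the radial minimisers recover the level $m_\infty$ by confining themselves where $a$ vanishes or by spreading over large spheres — which is precisely where radial symmetry (forbidding the single travelling bump that realises $m_\infty$) and the $L^{N/2}$--integrability of $a$ enter. A secondary, more bookkeeping, point is to fix $\tilde\l$ and to ensure the two solutions are distinct in the regime where both energies approach $m_\infty$.
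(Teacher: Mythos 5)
Your construction of the radial solution (minimisation on $\Ne_\l\cap\Hr$, compact embedding of $\Hr$ in $L^{p+1}(\RN)$, symmetric criticality, maximum principle) is exactly the paper's first step. The problem is that everything afterwards hinges on the strict separation that you yourself flag as the ``genuine obstacle'', and you do not prove it: that inequality \emph{is} the proof. The paper closes it by a different and stronger mechanism than your weak-limit analysis. Writing $m_{\l,{\rm r}}:=\inf\{I_\l(u):u\in\Ne_\l\cap\Hr\}$, it shows $\lim_{\l\to+\infty}m_{\l,{\rm r}}=+\infty$: if $\l_n\to+\infty$ and $u_n\in\Ne_{\l_n}\cap\Hr$ had $I_{\l_n}(u_n)\le C$, then $(u_n)_n$ would be bounded in $\H$ with $\l_n\irn a\,u_n^2\le C$ (Lemma \ref{le:un<c1}); by Lions' vanishing lemma and Corollary \ref{co:p+1} the mass cannot vanish, so there are points $y_n$ with $\int_{B_1(y_n)}u_n^2\ge c>0$, and the local vanishing forced by $\l_n\irn a\,u_n^2\le C$ pushes $|y_n|\to+\infty$ (Lemma \ref{le:un<c2}). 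Radial symmetry then replicates this fixed amount of mass over $\sim|y_n|^{N-1}$ disjoint unit balls centred on the sphere $\{|x|=|y_n|\}$, so $\|u_n\|_2^2\gtrsim c\,|y_n|^{N-1}\to+\infty$, contradicting boundedness. Your weak-limit route stalls exactly where you say it does, and for a structural reason: the limit $u_*$ is only constrained to vanish on $\{a>0\}$, and since $b\ge 0$, a nontrivial radial function confined to $\{a=0\}$ carries \emph{no} energy penalty above $m_\infty$ (its Nehari level for the $\l=0$ functional can even lie below $m_\infty$), so no contradiction with $L\le m_\infty$ can be extracted from confinement alone. The paper's sphere-replication argument never takes a weak limit, and it is the place where radial symmetry is used quantitatively.

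The second gap: even if your two-sided inequality were established, it is too weak for the case \eqref{ABforte2}--\eqref{it:h3} with $\l>\l^*$. There the second solution is the bound state of Theorem \ref{th:seconda}, whose level is only known to lie in $(m_\infty,2m_\infty)$ (between $\mathcal{B}_0^\l$ and $\mathcal{T}_\rho^\l<2m_\infty$); your assertion that this level ``stays close to $m_\infty$'' as $\l\to+\infty$ is proved neither in the paper nor in your sketch. Knowing only that the radial level exceeds $m_\infty$ therefore does not separate the radial solution from the bound state. What the paper actually uses is the divergence $m_{\l,{\rm r}}\to+\infty$: for $\l$ large one has $m_{\l,{\rm r}}>2m_\infty$, while \emph{both} candidate second solutions (the ground state under \eqref{ABforte1}, the bound state under \eqref{ABforte2}--\eqref{it:h3}) have level strictly below $2m_\infty$, so the two solutions are distinct, and the radial one, having level above $m_\l\le m_\infty$, is not a ground state. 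So your plan must be upgraded from ``the radial level exceeds $m_\infty$'' to ``the radial level exceeds $2m_\infty$ (indeed diverges)'', and the missing engine for that is precisely the radial mass-replication argument described above.
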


\section{Preliminaries}\label{se:pre}

In what follows, we will use the following notation:
\begin{itemize}
\item $\|u\|=\left(\irn |\n u|^2+a_\infty u^2\right)^{1/2}$ denotes the norm of $\H$;
\item $\|\cdot\|_{s}$ denotes the $L^s(\RN)$-norm;
\item $B_R(y)$ denotes the ball of radius $R$ centered at $y$;
\item $B_R$ denotes the ball of radius $R$ centered at $0$;
\item $c,c_i$ are positive constants which may vary from line to line;
\item $\Hr:=\{u\in \H : u \hbox{ is radially symmetric}\}$. 
\end{itemize}

\medskip

Solutions of \eqref{eql} are critical points of the functional $I_\l$ defined in \eqref{funzio}.

It is not difficult to verify that, whatever $\l \in \R^+$ is, the functional $I_\l$ is bounded neither from above nor from below. Hence, it is convenient
to consider $\Il$ restricted to a natural constraint, the Nehari manifold, that contains all the critical points of $\Il$ and on which  $\Il$ is bounded 
from below.

We set
\[
\Ne_\l:=\{u\in \H\setminus \{0\} :  \Il'(u)[u]=0\}.
\]
Next lemma contains the statement of the main properties of $\Ne_\l $.

\begin{lemma}\label{le:nehari}
Let \eqref{it:h1} and \eqref{it:h2} hold. Then for all  $\l \in \R^+$,
\begin{itemize}
\item[a)] $\Ne_\l$ is a $C^1$ regular manifold diffeomorphic to the sphere of $\H$;
\item[b)] $\Il$ is bounded from below on $\Ne_\l $ by a positive constant;
\item[c)] $u$ is a free critical point of $\Il$ if and only if $u$ is a critical point of $\Il$ constrained on $\Ne_\l $.
\end{itemize}
\end{lemma}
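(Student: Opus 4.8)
The plan is to exploit the standard fibering structure of the Nehari manifold. Writing $A_\l(u) := \irn |\n u|^2 + (a_\infty+\l a(x))u^2$ and $B(u) := \irn (b_\infty+b(x))|u|^{p+1}$, both functionals are strictly positive for $u\neq 0$ because $a_\infty,b_\infty>0$ and $a,b\ge 0$ by \eqref{it:h1}--\eqref{it:h2}. Since $A_\l$ is quadratic and $B$ is $(p+1)$-homogeneous, the $C^1$ constraint function $G_\l(u):=\Il'(u)[u]=A_\l(u)-B(u)$ satisfies, for $u\in\Ne_\l$ (where $A_\l(u)=B(u)$),
\[
G_\l'(u)[u]=2A_\l(u)-(p+1)B(u)=(1-p)A_\l(u)<0,
\]
because $p>1$. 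This is the pivotal computation: it shows that $0$ is a regular value of $G_\l$ on $\H\setminus\{0\}$, so that $\Ne_\l=G_\l^{-1}(0)$ is a $C^1$ hypersurface, and it is simultaneously the transversality estimate that drives parts (a) and (c).

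First I would establish the fibering. For fixed $u\neq 0$ the map $t\mapsto \Il(tu)=\tfrac{t^2}{2}A_\l(u)-\tfrac{t^{p+1}}{p+1}B(u)$ is positive and increasing near $0$ and tends to $-\infty$ as $t\to+\infty$; its derivative vanishes only at the unique point $t_u=(A_\l(u)/B(u))^{1/(p-1)}>0$, which is therefore its strict maximum and the only positive $t$ with $t_u u\in\Ne_\l$ (so in particular $\Ne_\l\neq\emptyset$). Restricting to the unit sphere $S$ of $\H$, the map $u\mapsto t_u u$ is the candidate diffeomorphism $S\to\Ne_\l$, whose inverse is the radial projection $v\mapsto v/\|v\|$. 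To upgrade continuity to $C^1$ regularity I would apply the implicit function theorem to $(t,u)\mapsto G_\l(tu)$, whose $t$-derivative at $t_u$ equals $t_u(1-p)A_\l(u)\neq 0$ by the inequality above, obtaining that $u\mapsto t_u$ is $C^1$ on $S$.

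For part (b) I would use that on $\Ne_\l$ the energy reduces to $\Il(u)=\big(\tfrac12-\tfrac1{p+1}\big)A_\l(u)=\tfrac{p-1}{2(p+1)}A_\l(u)$. Since $A_\l(u)\ge\|u\|^2$, and since the continuity of the embedding $\H\hookrightarrow L^{p+1}(\RN)$ together with the boundedness of $b$ gives $B(u)\le c\|u\|^{p+1}$, the Nehari identity $\|u\|^2\le A_\l(u)=B(u)\le c\|u\|^{p+1}$ forces $\|u\|\ge c^{-1/(p-1)}>0$ on $\Ne_\l$. Hence $A_\l(u)$, and therefore $\Il$, is bounded below on $\Ne_\l$ by a positive constant.

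Finally, part (c) is the usual Lagrange-multiplier argument. A free critical point $u$ satisfies $\Il'(u)[u]=0$, hence lies on $\Ne_\l$ and is trivially a constrained critical point. Conversely, if $u\in\Ne_\l$ is constrained-critical, there is $\mu\in\R$ with $\Il'(u)=\mu\,G_\l'(u)$; testing against $u$ yields $0=\Il'(u)[u]=\mu\,G_\l'(u)[u]=\mu(1-p)A_\l(u)$, and since $(1-p)A_\l(u)\neq 0$ we conclude $\mu=0$, i.e. $\Il'(u)=0$. The only point demanding genuine care is the $C^1$ diffeomorphism claim in (a), namely the $C^1$ dependence of $t_u$ on $u$ through the implicit function theorem, whereas the lower bound in (b) and the Lagrange argument in (c) are routine consequences of the single transversality inequality displayed above.
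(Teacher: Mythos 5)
Your proof is correct and follows essentially the same route as the paper's: the unique fibering projection $t_u=(A_\l(u)/B(u))^{1/(p-1)}$, the Sobolev-embedding chain on the Nehari identity forcing $\|u\|\ge c>0$ on $\Ne_\l$, the transversality inequality $G_\l'(u)[u]=(1-p)A_\l(u)<0$, and the Lagrange-multiplier argument for part (c). The only difference is that you make explicit the implicit-function-theorem step giving $C^1$ dependence of $t_u$ on $u$ (which the paper leaves implicit), and you correctly take the multiplier $\mu\in\R$ where the paper sloppily writes $\mu>0$.
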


\begin{proof}
Let $\l \ge 0$ be fixed.
\\
{\it a)} Let $u\in \H \setminus \{0\}$ be such that $\|u\|=1$. Then there exists a unique $t\in (0,+\infty)$ for which $t u\in \Ne_\l$.
Actually, considering the equation
\[
0=\Il'(tu)[tu]
=t^2 \left[
\irn  |\n u|^2+(a_\infty +\l a(x))u^2 
-t^{p-1}\irn (b_\infty+b(x))|u|^{p+1}  
\right],
\]
it is clear that it admits a unique positive solution $t_\l(u)>0$ and that the corresponding point $t_\l(u)u\in \Ne_\l $, the ``projection'' of $u$ on $\Ne_\l $,
is such that 
\[
\Il(t_\l(u)u)=\max_{t\ge 0}\Il(tu).
\]
Now, let $u\in \Ne_\l$, using \eqref{it:h1}, \eqref{it:h2} and the continuity of the embedding of $\H$ in $L ^{p+1}(\RN)$, we deduce 
\beq\label{210}
c_1\|u\|^2_{p+1}
\le c_2 \|u\|^2
\le \irn |\n u|^2+(a_\infty +\l a(x))u^2
=\irn (b_\infty+b(x))|u|^{p+1} 
\le C\|u\|^{p+1}_{p+1},
\eeq
which implies the relations
\beq \label{211}
\|u\|_{p+1}\ge   c>0, \qquad \|u\|\ge  c>0.
\eeq
Moreover, setting $G_\l(u) :=\Il'(u)[u]$, $G_\l \in C^1(\H, \R)$ follows from the regularity of $\Il$ and, using \eqref{211}, we get 
\beq\label{212}
G_\l '(u)[u]=-(p-1)\irn |\n u|^2+(a_\infty +\l a(x))u^2 
\le -(p-1)  c<0.
\eeq
{\it b)} For all $u\in \Ne_\l $
\beq\label{213}
\Il(u)= \left(\frac 12 -\frac 1{p+1}\right)\irn |\n u|^2+(a_\infty +\l a(x))u^2 
\ge \left(\frac 12 -\frac 1{p+1}\right)\|u\|^2
\ge c>0.
\eeq
\\
{\it c)} If $u\neq0$ is a critical point of $\Il$, then $\Il'(u)=0$ and then $u\in \Ne_\l $. On the other hand, if $u$ is a critical point of $\Il$ constrained on $\Ne_\l $, 
then $\mu >0$ exists such that 
\[
0=\Il'(u)[u]
=G_\l (u)=\mu G_\l '(u)[u],
\]
from which, considering \eqref{212}, $\mu =0$ follows.
\end{proof}

We stress that the inequalities in \eqref{210} are not affected by $\l $, so relations \eqref{211} are true for all $\l\in \R^+ $ and $u\in \Ne_\l $ with $c$ independent of 
$\l $ and $u$. We state explicitly this fact in the following
\begin{corollary}\label{co:p+1}
Suppose that \eqref{it:h1} and \eqref{it:h2} hold. There exist constants $\bar c>0$ and $C>0$ such that for all $\l>0$ and for all $u\in \Ne_\l$, the relation
\[
\bar c\le \|u\|_{p+1}\le C\|u\|
\]
holds true.
\end{corollary}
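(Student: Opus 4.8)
The plan is to read Corollary~\ref{co:p+1} off directly from the chain of inequalities \eqref{210} obtained in the proof of Lemma~\ref{le:nehari}, the only genuine task being to check that every constant appearing there can be chosen independently of $\l\in\R^+$ and of $u\in\Ne_\l$.

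First I would settle the upper bound. The estimate $\|u\|_{p+1}\le C\|u\|$ is exactly the first inequality $c_1\|u\|^2_{p+1}\le c_2\|u\|^2$ in \eqref{210}, read with $C=\sqrt{c_2/c_1}$: it is nothing but the continuous Sobolev embedding of $\H$ into $L^{p+1}(\RN)$, whose constant depends only on $N$ and $p$ and in particular does not involve $\l$. For the lower bound I would go back through \eqref{210} and keep track of where $\l$ enters. The middle equality is precisely the Nehari constraint $\Il'(u)[u]=0$ defining $\Ne_\l$. The second inequality, $c_2\|u\|^2\le\irn|\n u|^2+(a_\infty+\l a(x))u^2$, is valid because $a\ge0$ by \eqref{it:h1}, so that the only $\l$-dependent term, $\l\irn a(x)u^2$, is nonnegative and can simply be discarded; no $\l$ survives in the constant. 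The final inequality, $\irn(b_\infty+b(x))|u|^{p+1}\le C\|u\|^{p+1}_{p+1}$, uses only $b\in L^\infty(\RN)$ from \eqref{it:h2} and holds with $C=b_\infty+\|b\|_\infty$, again free of $\l$. Chaining the two outer terms gives $c_1\|u\|^2_{p+1}\le C\|u\|^{p+1}_{p+1}$, so that $\|u\|_{p+1}^{p-1}\ge c_1/C$ and hence $\|u\|_{p+1}\ge(c_1/C)^{1/(p-1)}=:\bar c>0$, a quantity depending only on $N$, $p$, $b_\infty$ and $\|b\|_\infty$.

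There is no substantial obstacle: the entire content of the statement is the uniformity in $\l$, and the mechanism is that $\l$ enters \eqref{210} solely through the nonnegative term $\l\irn a(x)u^2$, which is dropped when deriving both bounds. The only point requiring a little care is to exhibit $\bar c$ and $C$ in a form in which the $\l$-dependence has been genuinely removed and not merely absorbed into an implicit constant; once the two outer inequalities of \eqref{210} are rewritten as above, this is transparent, and the corollary follows.
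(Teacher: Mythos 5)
Your proposal is correct and is essentially the paper's own argument: the paper proves the corollary precisely by remarking that in the chain \eqref{210} the parameter $\l$ enters only through the nonnegative term $\l\irn a(x)u^2$, so the resulting bounds \eqref{211} and the Sobolev embedding constant are independent of $\l$ and $u$. Your write-up merely makes explicit the constants ($C=b_\infty+\|b\|_\infty$, $\bar c=(c_1/C)^{1/(p-1)}$), which is exactly the content the paper leaves implicit.
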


In what follows we consider also the ``limit'' functional 
$I_\infty: \H \to \R$, defined as 
\[
I_\infty(u)
=\frac 12 \irn |\n u|^2+a_\infty u^2
-\frac{1}{p+1}\irn b_\infty |u|^{p+1}
\]
and the related natural constraint 
\[
\Ne_\infty:=\{u\in \H\setminus \{0\} :  I_\infty'(u)[u]=0\}.
\]
Critical points of $I_\infty$ are solutions of the ``limit problem at infinity''
\beq\label{eqi}\tag{$\mathcal{P}_\infty$}
\left\{
\begin{array}{ll}
-\Delta u +a_\infty u=b_\infty |u|^{p-1}u, &\hbox{in }\RN,
\\
u\in \H.
\end{array}
\right.
\eeq
Clearly, the conclusions of Lemma \ref{le:nehari} hold true for $I_\infty$ and $\Ne_\infty$, too, and, in what follows, for any given $u\in \H\setminus\{0\}$, 
we denote its projection on $\Ne_\infty$ by $\t(u)u\in \Ne_\infty$. It is well known \cite{BL1} that, setting
\[
m_\infty:=\inf\{I_\infty(u), u\in \Ne_\infty\},
\]
$m_\infty>0$ is achieved by a radially symmetric function $w$, unique up to translations \cite{K}, decreasing when the radial coordinate increases and such that
\beq\label{wdecay}
\lim_{|x|\to +\infty}|D^jw(x)||x|^\frac{N-1}{2}e^{\sqrt{a_\infty}|x|}=d_j>0, \quad j=0,1, \quad
 d_j\in \R.
\eeq
It is worth also observing that, for any changing sign critical point $u$ of $I_\infty$, the inequality
\beq\label{31}
I_\infty(u)\ge 2m_\infty
\eeq
holds. Indeed, if $u=u^+-u^- $ is a critical point of $I_\infty$ with $u^+\neq 0$ and $u^-\neq 0$, we have 
\[
\|u^\pm\|^2=\|u^\pm\|_{p+1}^{p+1}
\]
and so
\[
\left(\frac 12 -\frac 1{p+1}\right)\|u^\pm\|^2\ge m_\infty=\left(\frac 12 -\frac 1{p+1}\right)\|w\|^2,
\]
hence
\[
I_\infty(u)=\left(\frac 12 -\frac 1{p+1}\right)\left[\|u^+\|^2 +\|u^-\|^2\right]\ge 2m_\infty.
\]

In what follows, for any $y\in \RN$, we use the notation
\beq\label{wy}
w_y:=w(\cdot-y).
\eeq


\begin{proposition}\label{pr:<}
Suppose that \eqref{it:h1} and \eqref{it:h2} hold. Let $\l \ge 0$. Set
\beq\label{ml}
m_\l:=\inf\{I_\l(u), u\in \Ne_\l\}.
\eeq
Then 
\beq \label{221}
0<m_\l\le m_\infty.
\eeq
\end{proposition}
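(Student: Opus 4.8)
The plan is to prove the two inequalities in \eqref{221} separately. The strict positivity $m_\l>0$ is essentially already contained in Lemma \ref{le:nehari}: by part \textit{b)}, equivalently by the chain \eqref{211}--\eqref{213}, every $u\in\Ne_\l$ satisfies
\[
I_\l(u)\ge\left(\frac12-\frac1{p+1}\right)\|u\|^2\ge c>0,
\]
with $c$ independent of $u$ (and of $\l$), so passing to the infimum gives $m_\l\ge c>0$.

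The substance is the upper bound $m_\l\le m_\infty$, and here I would exploit the translation invariance of the limit problem together with the vanishing at infinity of $a$ and $b$. Let $w$ be the radial positive ground state of $I_\infty$ and, for $y\in\RN$, consider its translate $w_y=w(\cdot-y)$ as in \eqref{wy}. Since $\Ne_\l$ is a Nehari manifold, for each $y$ there is a unique $t_\l(w_y)>0$ with $t_\l(w_y)w_y\in\Ne_\l$ and $I_\l(t_\l(w_y)w_y)=\max_{t\ge0}I_\l(tw_y)$; in particular $m_\l\le\max_{t\ge0}I_\l(tw_y)$ for every $y$. Writing $I_\l(tw_y)=\frac{t^2}{2}A_y-\frac{t^{p+1}}{p+1}B_y$ with
\[
A_y=\|w\|^2+\l\irn a(x)w_y^2,\qquad B_y=b_\infty\|w\|_{p+1}^{p+1}+\irn b(x)|w_y|^{p+1},
\]
an elementary one–variable maximization (maximizer $t_*=(A_y/B_y)^{1/(p-1)}$) gives
\[
\max_{t\ge0}I_\l(tw_y)=\left(\frac12-\frac1{p+1}\right)\frac{A_y^{(p+1)/(p-1)}}{B_y^{2/(p-1)}},
\]
a continuous function of $(A_y,B_y)$ on the positive quadrant.

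It then remains to check that the two perturbation integrals vanish as $|y|\to\infty$. For $\irn b(x)|w_y|^{p+1}$ this follows from \eqref{it:h2}: split over $\{|x-y|\le R\}$ and its complement, use $b\in L^\infty$ together with the smallness of $\int_{|x|>R}|w|^{p+1}$ on the far piece, and the decay of $b$ (which controls $\sup_{|x-y|\le R}b$ as $|y|\to\infty$) on the near piece. For $\irn a(x)w_y^2$ I would argue similarly, but with H\"older using exponents $N/2$ and $N/(N-2)$ to accommodate $a\in L^{N/2}$: on a far ball $\{|x|>\rho\}$ the tail $\|a\|_{L^{N/2}(|x|>\rho)}$ is small, while on $\{|x|\le\rho\}$ the quantity $\int_{|x|\le\rho}|w_y|^{2^*}$ tends to $0$ as $|y|\to\infty$ because the mass of $w_y$ escapes to infinity. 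Hence $A_y\to\|w\|^2$ and $B_y\to b_\infty\|w\|_{p+1}^{p+1}$, which are exactly the coefficients of $I_\infty(tw)$, so $\max_{t\ge0}I_\l(tw_y)\to\max_{t\ge0}I_\infty(tw)=I_\infty(w)=m_\infty$. Combining with $m_\l\le\max_{t\ge0}I_\l(tw_y)$ and letting $|y|\to\infty$ yields $m_\l\le m_\infty$.

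The main obstacle is precisely this last limiting step, and it is conceptual rather than computational: because the coefficients compete, one cannot simply compare $I_\l$ and $I_\infty$ on the untranslated $w$ (the term $\l\irn aw^2$ raises the energy while $\irn b|w|^{p+1}$ lowers it, with no definite sign for the net effect). The translation device circumvents this by sending both perturbations to zero simultaneously, and the only genuinely technical point is the justification of $\irn aw_y^2\to0$ under the mere $L^{N/2}$–integrability of $a$, handled by the H\"older-plus-splitting argument above (with the usual minor adaptation when $N=2$).
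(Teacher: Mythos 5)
Your proof is correct and follows essentially the same route as the paper: the lower bound comes from Lemma \ref{le:nehari}(b), and the upper bound from projecting the translates $w_y$ onto $\Ne_\l$ and letting $|y|\to\infty$ so that both perturbation integrals vanish, forcing the projected energy to converge to $m_\infty$. The only difference is that you spell out, via H\"older and a splitting argument, the vanishing of $\irn a(x)w_y^2$ and $\irn b(x)|w_y|^{p+1}$, which the paper dismisses as ``quite clear''.
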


\begin{proof}
Let $\l \ge 0$ be fixed. First inequality in \eqref{221} is a straight consequence of \eqref{213}. To show that $m_\l \le m_\infty$, it is enough to build a 
sequence $(u_n)_n$, $u_n \in \Ne_\l $, such that $\lim_{n}\Il (u_n)\le m_\infty$. To this end, let us consider $(y_n)_n $, with $y_n \in \RN, |y_n|\to +\infty$, 
as $n\to +\infty$
and set $u_n=t_n w_{y_n}$, where $w_{y_n}$ is defined in \eqref{wy} and $t_n =t_\l (w_{y_n})$ is such that $u_n=t_n w_{y_n}\in \Ne_\l$. We have
\begin{align*}
I_\l(u_n)&=\frac{t_n^2}{2} \irn |\n w_{y_n}|^2+\big(a_\infty+\l a(x)\big)w_{y_n}^2 
-\frac{t_n^{p+1}}{p+1}\irn\big(b_\infty+b(x)\big)w_{y_n}^{p+1}
\\
&=\frac{t_n^2}{2} \left[ \|w\|^2+\l \irn a(x+y_n)w^2 \right]
-\frac{t_n^{p+1}}{p+1} \left[b_\infty\|w\|_{p+1}^{p+1}+\irn b(x+y_n)w^{p+1}\right].
\end{align*}
Moreover we have
\[
t_n^{p-1} 
=\frac{ \|w\|^2+\l \irn a(x+y_n)w^2 }{b_\infty\|w\|_{p+1}^{p+1}+\irn b(x+y_n)w^{p+1}}
\]
and, since it is quite clear that 
\[
\lim_n \irn a(x+y_n)w^{2}=0
\]
and 
\[
\lim_n \irn b(x+y_n)w^{p+1}=0,
\]
we deduce that,  as  $n\to +\infty$,
\beq  \label{22'}
 t_n \to 1,
\qquad
\Il (u_n) \to m_\infty. 
\eeq
\end{proof}

Next proposition states a result which is a straight application of the well known concentration-compactness principle \cite{L} and maximum principle.

\begin{proposition}\label{pr:22'}
If the strictly inequality 
\[
m_\l <m_\infty
\]
holds, $m_\l$ is achieved by a positive function and, moreover, all the minimizing sequences are relatively compact.
\end{proposition}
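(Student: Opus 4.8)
\textbf{Proof proposal for Proposition \ref{pr:22'}.}

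The plan is to invoke the concentration-compactness principle of P.-L. Lions applied to an arbitrary minimizing sequence $(u_n)_n \subset \Ne_\l$ for $m_\l$, that is, a sequence with $I_\l(u_n) \to m_\l$. First I would show such a sequence is bounded in $\H$: from the Nehari constraint and the representation $I_\l(u_n) = \left(\frac12 - \frac1{p+1}\right)\|u_n\|^2 + \frac{\l}{2}\irn a(x)u_n^2$ together with $a \ge 0$, one deduces $\|u_n\|^2 \le c\, I_\l(u_n)$, so $(u_n)_n$ is bounded and, up to a subsequence, $u_n \weakto u$ in $\H$. The strategy is then to rule out both vanishing and dichotomy, so that compactness (concentration) holds, forcing strong convergence of a translated subsequence.

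Next I would apply the concentration-compactness dichotomy to the mass densities $\rho_n = |\n u_n|^2 + a_\infty u_n^2$, whose total mass $\|u_n\|^2$ is bounded and, by Corollary \ref{co:p+1} and \eqref{211}, bounded below away from zero. Vanishing is excluded because $\|u_n\|_{p+1} \ge \bar c > 0$ by Corollary \ref{co:p+1}, and a vanishing sequence would have $\|u_n\|_{p+1} \to 0$ by Lions' vanishing lemma, a contradiction. The crux of the argument is excluding dichotomy, and this is where the strict inequality $m_\l < m_\infty$ enters decisively. If the sequence split into two pieces carrying positive mass at mutually diverging locations, a splitting/sub-additivity analysis of the energy would produce a decomposition of $m_\l$ as a sum of energies each bounded below, with the piece escaping to infinity contributing at least $m_\infty$ (since far away the coefficients $a,b$ vanish and the relevant functional becomes $I_\infty$, whose Nehari infimum is $m_\infty$); this would yield $m_\l \ge m_\infty$, contradicting the hypothesis. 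I expect this dichotomy-exclusion step to be the main obstacle, as it requires careful bookkeeping of the Nehari normalization of each piece and the use of \eqref{it:h1}--\eqref{it:h2} to show the cross terms and the translated coefficient integrals vanish.

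Having excluded vanishing and dichotomy, concentration holds: there exist $(y_n)_n \subset \RN$ such that $u_n(\cdot + y_n)$ converges strongly in $\H$ after passing to a subsequence. I would then argue that $(y_n)_n$ must remain bounded: if $|y_n| \to \infty$, the limit would again solve the problem at infinity with energy $m_\infty$, giving $m_\l \ge m_\infty$ and contradicting strictness. Hence $u_n \to u$ strongly in $\H$, $u \in \Ne_\l$ by the strong convergence together with \eqref{211}, and $I_\l(u) = m_\l$, so $m_\l$ is achieved. Finally, to obtain a positive minimizer, I would replace $u$ by $|u|$: since $I_\l(|u|) \le I_\l(u)$ and $|u| \in \Ne_\l$ after projection, $|u|$ is also a minimizer, and the maximum principle (applied to the equation $-\Delta |u| + (a_\infty + \l a(x))|u| = (b_\infty + b(x))|u|^p \ge 0$) upgrades $|u| \ge 0$ to $|u| > 0$ everywhere. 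The relative compactness of \emph{all} minimizing sequences follows because the argument above was carried out for an arbitrary minimizing sequence.
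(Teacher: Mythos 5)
The paper offers no detailed proof of this proposition, stating only that it is ``a straight application of the well known concentration-compactness principle \cite{L} and maximum principle,'' and your proposal implements exactly that scheme: boundedness on $\Ne_\l$, exclusion of vanishing via Corollary \ref{co:p+1}, exclusion of dichotomy and of escaping concentration points by playing the strict inequality $m_\l<m_\infty$ against the cost $m_\infty$ of mass at infinity, and positivity of the minimizer via $|u|$ and the maximum principle. This is essentially the paper's intended argument, so your proposal is correct and takes the same approach; the only part left at sketch level (the Nehari bookkeeping in the dichotomy case, where one must also handle the alternative that the non-escaping piece has vanishing norm) is standard and is explicitly flagged by you.
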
 

As immediate consequence of Proposition \ref{pr:22'} we get
\begin{proposition}\label{pr:23}
Let \eqref{it:h1} and \eqref{it:h2} hold. Let $\l =0$. Then \eqref{eql} has a ground state positive solution.
\end{proposition}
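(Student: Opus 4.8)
The plan is to invoke Proposition \ref{pr:22'}: since any minimizer of $I_0$ on $\Ne_0$ is, by part c) of Lemma \ref{le:nehari}, a free critical point of $I_0$ and hence a solution of \eqref{eql}, it suffices to establish the strict inequality $m_0<m_\infty$. The positivity of the minimizer and the relative compactness of the minimizing sequences are then delivered directly by Proposition \ref{pr:22'}, so the entire argument reduces to this single comparison.

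To prove $m_0<m_\infty$, I would test $I_0$ against the ground state $w$ of the limit problem \eqref{eqi}. Writing $t_0(w)w$ for the projection of $w$ onto $\Ne_0$, part a) of Lemma \ref{le:nehari} gives $m_0\le I_0(t_0(w)w)=\max_{t\ge 0}I_0(tw)$. Because $\l=0$, the quadratic part of $I_0$ coincides with that of $I_\infty$, namely $\tfrac{t^2}{2}\|w\|^2$, and the two functionals differ only through the extra term carrying $b(x)$:
\[
I_0(tw)=\frac{t^2}{2}\|w\|^2-\frac{t^{p+1}}{p+1}\left[b_\infty\|w\|_{p+1}^{p+1}+\irn b(x)w^{p+1}\right].
\]
The decisive observation is that, since $w>0$ everywhere and \eqref{it:h2} gives $b\ge 0$, $b\neq 0$, one has $\irn b(x)w^{p+1}>0$, whence $I_0(tw)<I_\infty(tw)$ for every $t>0$. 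Evaluating at the maximizer $t^*=t_0(w)>0$ of $t\mapsto I_0(tw)$ and using $\max_{t\ge 0}I_\infty(tw)=I_\infty(w)=m_\infty$ (recall $w\in\Ne_\infty$, so $\t(w)=1$), I obtain
\[
m_0\le I_0(t^*w)<I_\infty(t^*w)\le \max_{t\ge 0}I_\infty(tw)=m_\infty,
\]
which is exactly the required strict inequality.

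I do not expect a genuine obstacle here: the case $\l=0$ is precisely the one in which the competition is switched off, so the perturbation $b(x)$ acts only to lower the energy and the comparison with $w$ is clean. The only points deserving a line of justification are that the projection coefficient $t^*=t_0(w)$ is strictly positive (immediate from $w\neq 0$ and the uniqueness in Lemma \ref{le:nehari} a)) and that the pointwise inequality $I_0(tw)<I_\infty(tw)$ survives the passage to the maximum, which is handled simply by evaluating both functionals at the single value $t^*$.
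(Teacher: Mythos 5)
Your proof is correct and follows exactly the paper's route: the paper also obtains $m_0<m_\infty$ by testing $I_0$ with $w$ and then invokes Proposition \ref{pr:22'}. Your write-up simply fills in the details (the strict positivity of $\irn b(x)w^{p+1}$ and the comparison at the projection point $t^*=t_0(w)$) that the paper leaves implicit.
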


\begin{proof}
The inequality 
\[
m_0 <m_\infty
\]
is easily obtained testing $I_0$ with $w$. Hence the claim follows using Proposition \ref{pr:22'}.
\end{proof}

We recall also a representation theorem of the Palais-Smale sequences (\cite{BC}, see also \cite{BaLio}) which is an useful tool when the equality
$m_\l =m_\infty$ occurs.

\begin{lemma}\label{le:comp}
Suppose that \eqref{it:h1} and \eqref{it:h2} hold. Let $(u_n)_n$ be a (PS) sequence of $\Il$ constrained on $\Ne_\l $, namely $u_n \in \Ne_\l $ and 
\begin{align*}
&\Il(u_n) \hbox{ is bounded};
\\
&\n {I_\l}_{\vert \Ne_\l} (u_n)\to 0 \hbox{ strongly in }\H.
\end{align*}
Then, up to a subsequence, there exist a solution $\bar u$ of \eqref{eql}, a number $k\in \N\cup\{0\}$, $k$ functions $u^1,\ldots,u^k$ of $\H$ and $k$ 
sequences of points $(y^j_n)_n,y^j_n\in \RN, 0\le j\le k$ such that, as $n \to +\infty$,
\begin{align*}
& u_n -\sum_{j=1}^{k}u^j(\cdot- y^j_n)\to \bar u, \hbox{ in }\H;
\\
&\Il(u_n)\to \Il(\bar u)+\sum_{j=1}^{k}I_\infty (u^j);
\\
&|y^i_n|\to +\infty,\ |y^j_n-y^i_n|\to +\infty \hbox{ if }i\neq j;
\\
& u^j \hbox{ are weak solutions of \eqref{eqi}}.
\end{align*}
Moreover we agree that in the case $k=0$, the above holds without $u^j$.
\end{lemma}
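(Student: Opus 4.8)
The plan is to follow the now-standard \emph{global compactness} scheme (in the spirit of Struwe and of Benci--Cerami \cite{BC}), extracting first the weak limit and then peeling off, one at a time, the ``bubbles'' carrying the mass that escapes to infinity. First I would record that such a sequence is bounded in $\H$: by \eqref{213} one has $\Il(u_n)=\left(\frac12-\frac1{p+1}\right)\|u_n\|^2$, so the boundedness of $\Il(u_n)$ forces that of $\|u_n\|$. Next I would upgrade the constrained $(PS)$ condition to a free one, i.e. show $\Il'(u_n)\to 0$ in $H^{-1}(\RN)$. Writing the orthogonal decomposition $\Il'(u_n)=\n{I_\l}_{\vert\Ne_\l}(u_n)+\mu_n G_\l'(u_n)$ for Lagrange multipliers $\mu_n$ and testing with $u_n$, the relation $\Il'(u_n)[u_n]=0$ (valid since $u_n\in\Ne_\l$) together with \eqref{212}, which gives $G_\l'(u_n)[u_n]\le -c<0$, yields $\mu_n\to 0$; the boundedness of $G_\l'(u_n)$ then gives the claim.

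Passing to a subsequence, let $u_n\weakto\bar u$ in $\H$, with $u_n\to\bar u$ in $L^{p+1}_{\mathrm{loc}}$ and a.e. Using that multiplication by $a\in L^{N/2}(\RN)$ and by $b\in L^\infty(\RN)$, both vanishing at infinity, define compact perturbations, I would show $\Il'(\bar u)[\vfi]=\lim_n\Il'(u_n)[\vfi]=0$ for every $\vfi$, so that $\bar u$ is a (possibly trivial) weak solution of \eqref{eql}. Setting $z^1_n:=u_n-\bar u\weakto 0$, a Brezis--Lieb type splitting, together with the fact that $z^1_n\weakto 0$ makes the coefficient contributions $\l\irn a(x)(z^1_n)^2$ and $\irn b(x)|z^1_n|^{p+1}$ vanish (again by compactness of the multiplications), gives the decoupling $\Il(u_n)=\Il(\bar u)+I_\infty(z^1_n)+o(1)$ and $\Il'(u_n)-\Il'(\bar u)-I_\infty'(z^1_n)\to 0$ in $H^{-1}(\RN)$, whence $I_\infty'(z^1_n)\to 0$.

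A dichotomy follows. If $z^1_n\to 0$ strongly, I stop with $k=0$. Otherwise $\|z^1_n\|\ge c>0$ along a subsequence, and by Lions' vanishing alternative \cite{L} the quantity $\sup_{y\in\RN}\int_{B_1(y)}|z^1_n|^2$ cannot tend to $0$: were it so, $z^1_n\to0$ in $L^{p+1}(\RN)$ and then $I_\infty'(z^1_n)[z^1_n]\to0$ would force $z^1_n\to0$ in $\H$, a contradiction. Hence there are points $y^1_n$ with $\int_{B_1(y^1_n)}|z^1_n|^2\ge\d>0$, the translates satisfy $z^1_n(\cdot+y^1_n)\weakto u^1\neq0$, necessarily $|y^1_n|\to\infty$ since $z^1_n\weakto0$, and by translation invariance $u^1$ is a nontrivial weak solution of \eqref{eqi}. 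I then iterate: with $z^2_n:=z^1_n-u^1(\cdot-y^1_n)$ one obtains $z^2_n\weakto0$, $I_\infty'(z^2_n)\to0$, and $I_\infty(z^1_n)=I_\infty(u^1)+I_\infty(z^2_n)+o(1)$, the separation $|y^j_n-y^i_n|\to\infty$ for $i\neq j$ being forced by $z^{j+1}_n(\cdot+y^i_n)\weakto0$ for $i\le j$. Since every nontrivial solution of \eqref{eqi} carries energy $I_\infty(u^j)\ge m_\infty>0$ while the total energy is bounded, the procedure stops after finitely many steps $k$, the last remainder converging strongly to $0$; this gives both $u_n-\sum_{j=1}^{k}u^j(\cdot-y^j_n)\to\bar u$ in $\H$ and the asserted energy identity.

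I expect the main obstacle to be the iteration and its termination: establishing at each stage that the translated remainder has a nonzero weak limit solving the limit problem, that the energies decouple \emph{exactly} (the Brezis--Lieb bookkeeping carried uniformly in the number of bubbles), that the concentration centers separate, and that the quantization $I_\infty(u^j)\ge m_\infty$ caps their number. The one genuinely problem-specific point, as opposed to the classical constant-coefficient setting, is verifying that the escaping mass feels only the limit functional $I_\infty$, namely that the $a(x)$ and $b(x)$ terms are compact perturbations that vanish along any $z^j_n\weakto0$; this is precisely where the hypotheses \eqref{it:h1}--\eqref{it:h2}, i.e. $a\in L^{N/2}(\RN)$ and $b\in L^\infty(\RN)$ with $a,b\to 0$ at infinity, are used.
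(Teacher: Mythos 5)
Your proof is correct, and it is essentially the paper's own approach: the paper does not prove this lemma but recalls it as a known representation theorem for Palais--Smale sequences, citing \cite{BC} (see also \cite{BaLio}), and your sketch reconstructs precisely the Benci--Cerami/Struwe global-compactness argument of those references, including the one problem-specific point (that the terms involving $a$ and $b$ act as compact perturbations, so the escaping bubbles solve the limit problem). One cosmetic slip: for $u_n\in\Ne_\l$ one has $\Il(u_n)=\left(\frac12-\frac1{p+1}\right)\left(\|u_n\|^2+\l\irn a(x)u_n^2\right)$ rather than $\left(\frac12-\frac1{p+1}\right)\|u_n\|^2$, but since $a\ge 0$ and $\l\ge 0$ this still yields the boundedness of $(u_n)_n$.
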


\begin{corollary}\label{co:24bis}
Assume that \eqref{it:h1} and \eqref{it:h2} hold. If $m_\l =m_\infty$, then the functional $\Il$ satisfies the (PS) condition at level $c$,  for all 
$c\in  (m_\infty, 2 m_\infty)$.
\end{corollary}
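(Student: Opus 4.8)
The plan is to read off the (PS) condition directly from the concentration representation of Lemma~\ref{le:comp}. Fix $c\in(m_\infty,2m_\infty)$ and let $(u_n)_n$ be a (PS) sequence for $\Il$ constrained on $\Ne_\l$ at level $c$. Applying Lemma~\ref{le:comp}, up to a subsequence I obtain a solution $\bar u$ of \eqref{eql}, an integer $k\in\N\cup\{0\}$, nontrivial weak solutions $u^1,\dots,u^k$ of \eqref{eqi}, and sequences $(y^j_n)_n$ with the stated divergence properties, such that
\[
u_n-\sum_{j=1}^k u^j(\cdot-y^j_n)\to\bar u \quad\hbox{in }\H,
\qquad
c=\Il(\bar u)+\sum_{j=1}^k I_\infty(u^j).
\]
Since $k=0$ amounts precisely to strong convergence $u_n\to\bar u$ in $\H$, the whole argument reduces to excluding $k\ge 1$ by an energy-counting argument based on the strict constraint $m_\infty<c<2m_\infty$.

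Next I would record the two elementary lower bounds on the pieces of the energy. As $\bar u$ solves \eqref{eql}, either $\bar u=0$, and then $\Il(\bar u)=0$, or $\bar u\neq 0$, and then $\bar u\in\Ne_\l$, so by the definition of $m_\l$ and the hypothesis $m_\l=m_\infty$ one has $\Il(\bar u)\ge m_\l=m_\infty$. For each bubble $u^j$, being a nontrivial critical point of $I_\infty$, it lies on $\Ne_\infty$ and hence $I_\infty(u^j)\ge m_\infty$.

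The decisive step, which I expect to be the only genuine obstacle, is to rule out the configuration $k=1$ with $\bar u=0$: here the crude bound merely gives $c=I_\infty(u^1)\ge m_\infty$, which is not yet a contradiction. I would resolve this by showing that the energy of a nontrivial critical point of $I_\infty$ cannot lie strictly inside $(m_\infty,2m_\infty)$. Indeed, if $u^1$ has constant sign it is, up to sign and translation, the unique positive solution $w$ by \cite{K}, so $I_\infty(u^1)=m_\infty$; if instead $u^1$ changes sign, inequality \eqref{31} yields $I_\infty(u^1)\ge 2m_\infty$. In either case $c=I_\infty(u^1)\notin(m_\infty,2m_\infty)$, a contradiction.

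Finally I would dispose of the remaining possibilities by addition. If $k\ge 2$, then $\sum_{j}I_\infty(u^j)\ge 2m_\infty$ while $\Il(\bar u)\ge 0$, so $c\ge 2m_\infty$; if $k=1$ with $\bar u\neq 0$, then $c\ge m_\infty+m_\infty=2m_\infty$. Both contradict $c<2m_\infty$, the case $k=1$, $\bar u=0$ was excluded above, and $k=0$, $\bar u=0$ would force $c=0<m_\infty$. Hence necessarily $k=0$, which gives $u_n\to\bar u$ strongly in $\H$; thus $\Il$ constrained on $\Ne_\l$ satisfies the (PS) condition at every level $c\in(m_\infty,2m_\infty)$.
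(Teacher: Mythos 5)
Your proof is correct and follows essentially the same route as the paper: apply Lemma~\ref{le:comp} to the constrained (PS) sequence and exclude $k\ge 1$ by energy counting, using $I_\infty(u^j)\ge m_\infty$ (and $\ge 2m_\infty$ for sign-changing bubbles) together with $\Il(\bar u)\ge m_\l=m_\infty$ whenever $\bar u\neq 0$. The one point where you go beyond the paper's terse argument is the case $k=1$, $\bar u=0$ with a sign-definite bubble, where the bound $I_\infty(u^1)\ge m_\infty$ alone does not contradict $c>m_\infty$; your appeal to the uniqueness result of \cite{K}, giving $I_\infty(u^1)=m_\infty$ exactly, supplies precisely the detail that the paper leaves implicit.
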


\begin{proof}
Let $(u_n)_n$ be a (PS) sequence of $\Il$ constrained on $\Ne_\l $ at level $c$, with $c\in  (m_\infty, 2 m_\infty)$, and apply Lemma \ref{le:comp}.
The claim follows recalling that any solution $u$ of \eqref{eqi} verifies $I_\infty(u)\ge m_\infty$ and, if it changes sign, $I_\infty(u)\ge 2 m_\infty$, 
furthermore any critical point $\bar u$ of $\Il$ is such that $\Il(\bar u)\ge m_\l =m_\infty$.
\end{proof}

\begin{lemma}\label{le:24ter}
Let \eqref{it:h1} and \eqref{it:h2} hold. Let $\l >0$. If $u\in \Ne_\l$ is a critical point of $\Il$ with $\Il(u)<2m_\l $, then $u$ does not
change sign.
\end{lemma}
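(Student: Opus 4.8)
The plan is to argue by contradiction, exploiting the additive behaviour of $\Il$ on the positive and negative parts of $u$. Suppose $u\in\Ne_\l$ is a critical point of $\Il$ with $\Il(u)<2m_\l$, and assume, contrary to the claim, that $u$ changes sign, so that $u^+=\max\{u,0\}$ and $u^-=\max\{-u,0\}$ are both nonzero elements of $\H$. By part c) of Lemma \ref{le:nehari}, $u$ is a free critical point of $\Il$, hence $\Il'(u)[\vfi]=0$ for every $\vfi\in\H$; I would then test this identity against $\vfi=u^+$ and against $\vfi=u^-$.

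First I would use the standard pointwise relations for the positive and negative parts of an $H^1$ function, namely $\n u\cdot\n u^+=|\n u^+|^2$, $u\,u^+=(u^+)^2$ and $|u|^{p-1}u\,u^+=(u^+)^{p+1}$ almost everywhere, together with the analogous relations for $u^-$ (which carry compensating signs) and the fact that $\n u^+$ and $\n u^-$ have essentially disjoint supports. Inserting $\vfi=u^+$ into $\Il'(u)[\vfi]=0$ then gives
\[
\irn |\n u^+|^2+(a_\infty+\l a(x))(u^+)^2=\irn (b_\infty+b(x))(u^+)^{p+1},
\]
which is exactly the Nehari identity $\Il'(u^+)[u^+]=0$; since $u^+\neq0$, this means $u^+\in\Ne_\l$. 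The same computation with $\vfi=u^-$ produces $\Il'(u)[u^-]=-\Il'(u^-)[u^-]$, whence $u^-\in\Ne_\l$ as well.

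Next I would observe that, because $\n u^+$ and $\n u^-$ have essentially disjoint supports, the quadratic form $u\mapsto\irn |\n u|^2+(a_\infty+\l a(x))u^2$ splits additively over $u^+$ and $u^-$. Combining this splitting with the representation \eqref{213} of $\Il$ on $\Ne_\l$, applied to each of $u$, $u^+$ and $u^-$, yields
\[
\Il(u)=\Il(u^+)+\Il(u^-).
\]
Since $u^+,u^-\in\Ne_\l$, the definition \eqref{ml} of $m_\l$ forces $\Il(u^+)\ge m_\l$ and $\Il(u^-)\ge m_\l$, so that $\Il(u)\ge 2m_\l$, contradicting the hypothesis $\Il(u)<2m_\l$. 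Therefore $u$ cannot change sign.

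The argument is essentially routine, and I expect the only delicate point to be the sign bookkeeping in the second step: one must verify that testing the Euler--Lagrange equation against $u^+$ (respectively $u^-$) reproduces precisely the Nehari identity for $u^+$ (respectively $u^-$), with no surviving cross terms. This rests entirely on the standard almost-everywhere identities for positive and negative parts in $H^1(\R^N)$ and on the disjointness of the supports of $\n u^+$ and $\n u^-$; once these are in hand, the strict inequality $\Il(u)<2m_\l$ is exactly what rules out both parts being simultaneously nonzero.
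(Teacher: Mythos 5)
Your proof is correct and follows essentially the same route as the paper: the paper's proof simply invokes ``the same argument used to show \eqref{31}'', which is precisely what you carry out in detail — testing $\Il'(u)=0$ against $u^+$ and $u^-$ to place both in $\Ne_\l$, splitting the weighted quadratic form via \eqref{213}, and concluding $\Il(u)=\Il(u^+)+\Il(u^-)\ge 2m_\l$.
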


\begin{proof}
The same argument used to show \eqref{31} allows to conclude that a changing sign solution $\bar u$ of \eqref{eql} must satisfy $\Il(\bar u)\ge 2 m_\l $.
\end{proof}

Next lemmas analyse the behavior of some sequences of functions $(u_n)_n$, $u_n \in \Ne_{\l_n}$, on which the energy is bounded.

\begin{lemma}\label{le:un<c1}
	Let  \eqref{it:h1} and \eqref{it:h2} hold. Let $(\l_n)_n$ be a sequence of positive numbers and, 
	for all $n\in \N$, let $u_n\in \Ne_{\l_n}$ be such that $I_{\l_n }(u_n)\le C$. 
	Then $(u_n)_n$ is bounded in $\H$. 
\end{lemma}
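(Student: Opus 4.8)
The plan is to exploit the Nehari identity already used in the proof of Lemma~\ref{le:nehari}, now keeping track of the varying parameters $\l_n$. For each $n$, since $u_n\in\Ne_{\l_n}$ the constraint $I_{\l_n}'(u_n)[u_n]=0$ reads
\[
\irn |\n u_n|^2+(a_\infty+\l_n a(x))u_n^2=\irn(b_\infty+b(x))|u_n|^{p+1},
\]
so that, substituting the right-hand side into the definition of $I_{\l_n}$, the energy collapses to its quadratic part:
\[
I_{\l_n}(u_n)=\left(\frac 12-\frac1{p+1}\right)\irn |\n u_n|^2+(a_\infty+\l_n a(x))u_n^2 .
\]
This is exactly the computation leading to \eqref{213}, but the point here is that the coefficient $\frac12-\frac1{p+1}=\frac{p-1}{2(p+1)}$ is a fixed positive number, independent of $n$, because $p>1$.

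From the energy bound $I_{\l_n}(u_n)\le C$ I would then immediately deduce
\[
\irn |\n u_n|^2+(a_\infty+\l_n a(x))u_n^2\le \frac{C}{\ \frac12-\frac1{p+1}\ }=:C',
\]
with $C'$ independent of $n$. The final step uses the sign condition in \eqref{it:h1}: since $a\ge 0$ and $\l_n>0$ we have $\l_n a(x)u_n^2\ge 0$ pointwise, whence
\[
\|u_n\|^2=\irn |\n u_n|^2+a_\infty u_n^2\le \irn |\n u_n|^2+(a_\infty+\l_n a(x))u_n^2\le C',
\]
giving the desired uniform bound on $(u_n)_n$ in $\H$.

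There is essentially no serious obstacle here: the only subtlety is to check that the multiplicative constant obtained after invoking the Nehari identity does not depend on $n$, which is guaranteed by the fact that it only involves $p$. In particular no bound on the sequence $(\l_n)_n$ is required, and the argument works equally well if $\l_n\to+\infty$; the nonnegativity of $a$ assumed in \eqref{it:h1} is precisely what lets us discard the (possibly large) term $\l_n a(x)u_n^2$ rather than having to control it, which is why a single constant $C'$ suffices uniformly in $n$.
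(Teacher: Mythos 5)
Your proof is correct and follows exactly the paper's argument: the paper likewise uses the Nehari identity to write $I_{\l_n}(u_n)=\left(\frac12-\frac1{p+1}\right)\left(\|u_n\|^2+\l_n\irn a(x)u_n^2\right)\le C$ and then discards the nonnegative term $\l_n\irn a(x)u_n^2$ (this is the step the paper compresses into ``the claim easily follows''). Your write-up simply makes explicit the uniformity in $n$ of the constant and the role of $a\ge 0$, $\l_n>0$, which is exactly the intended reasoning.
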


\begin{proof}
	Being
	\beq\label{numero}
	\Iln(u_n)
	=\left(\frac 12 - \frac1{p+1}\right)\left(\|u_n\|^2+\ln \irn a(x)u_n^2 \right)\le C,
	\eeq
	the claim easily follows.
\end{proof}

\begin{lemma}\label{le:un<c2}
Let  \eqref{it:h1} and \eqref{it:h2} hold. Let $(\l_n)_n$ be a diverging sequence of positive numbers and, 
for all $n\in \N$, let $u_n\in \Ne_{\l_n}$ be such that $u_n>0$ and $I_{\l_n }(u_n)\le C$. 
Then
the following relations hold true as $n\to +\infty$:
\begin{align}
& u_n\vert_{B_R}\to 0, \quad\hbox{ in }L^2(B_R), \ \hbox{ for all }R>0, \hbox{ as }n\to +\infty, \label{l2br}
\\
&\ln \irn a(x)u_n^2 \le C,\label{a}
\\
&\irn b(x)|u_n|^{p+1} \to 0, \quad \hbox{ as }n \to +\infty. \label{b}
\end{align}
Moreover, a positive constant $c>0$ and sequence $(y_n)_n$, with $y_n\in \RN$ and $|y_n|\to +\infty$, exist for which
\[ 
\lim_n \int_{B_1(y_n)}u_n^2\ge c.
\]
\end{lemma}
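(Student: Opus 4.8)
The plan is to establish the four conclusions in order, since each feeds naturally into the next. I work under the hypothesis that $\l_n\to+\infty$, $u_n\in\Ne_{\l_n}$, $u_n>0$, and $I_{\l_n}(u_n)\le C$. By Lemma \ref{le:un<c1} the sequence $(u_n)_n$ is bounded in $\H$, so up to a subsequence $u_n\weakto u_0$ weakly in $\H$ and strongly in $L^2_{\mathrm{loc}}$. Rewriting the energy bound as in \eqref{numero}, namely
\[
\left(\frac 12 - \frac1{p+1}\right)\left(\|u_n\|^2+\l_n \irn a(x)u_n^2 \right)\le C,
\]
and noting that both summands are nonnegative, I immediately read off \eqref{a}, the bound $\l_n \irn a(x)u_n^2 \le C'$. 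Because $\l_n\to+\infty$, this forces $\irn a(x)u_n^2\to 0$. Since $a\ge 0$, $a\neq 0$ and $a$ is continuous-like in the weak sense, on any fixed ball $B_R$ where $a$ is bounded below by a positive constant on a subset of positive measure, the weak limit $u_0$ must satisfy $\irn a(x)u_0^2=0$, whence $u_0=0$ a.e. on $\{a>0\}$; combined with $\lim_{|x|\to\infty}a(x)=0$ one argues $u_0\equiv 0$. This yields $u_n\to 0$ strongly in $L^2(B_R)$ for every $R$, giving \eqref{l2br}.

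For \eqref{b}, I split the integral $\irn b(x)|u_n|^{p+1}$ over $B_R$ and its complement. On $B_R$, Hölder's inequality together with the $\H$-boundedness and the local $L^2$ (hence local $L^{p+1}$ after interpolation with the uniform $L^{2^*}$ bound) decay from \eqref{l2br} makes the contribution small; alternatively, since $b\in L^\infty$ and $u_n\to 0$ in $L^{p+1}(B_R)$ by the compact Rellich embedding into subcritical $L^{p+1}(B_R)$, the inner piece tends to zero. On $|x|>R$, I use $\lim_{|x|\to\infty}b(x)=0$ from \eqref{it:h2}: given $\e>0$ choose $R$ so that $b(x)<\e$ there, bounding the outer piece by $\e\,\|u_n\|_{p+1}^{p+1}\le \e\,C$. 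Letting $n\to\infty$ and then $\e\to 0$ delivers \eqref{b}.

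The main obstacle, and the crux of the lemma, is the final concentration statement: producing $c>0$ and $(y_n)_n$ with $|y_n|\to+\infty$ and $\liminf_n \int_{B_1(y_n)}u_n^2\ge c$. The key is that the mass of $u_n$ cannot vanish in the sense of P.L. Lions, because $u_n\in\Ne_{\l_n}$ keeps $\|u_n\|_{p+1}$ bounded below by Corollary \ref{co:p+1}. If instead $\sup_{y\in\RN}\int_{B_1(y)}u_n^2\to 0$, then Lions' vanishing lemma gives $u_n\to 0$ strongly in $L^{p+1}(\RN)$, contradicting $\|u_n\|_{p+1}\ge\bar c>0$. Hence there is a fixed $c>0$ and points $y_n$ with $\int_{B_1(y_n)}u_n^2\ge c$ along a subsequence. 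It remains to check $|y_n|\to+\infty$: if a subsequence of $(y_n)_n$ stayed bounded, then $B_1(y_n)\subset B_R$ for a fixed $R$, and $\int_{B_1(y_n)}u_n^2\le\int_{B_R}u_n^2\to 0$ by \eqref{l2br}, again a contradiction. This establishes the claimed concentration of mass escaping to infinity and completes the proof.
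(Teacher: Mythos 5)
Your treatment of \eqref{a}, of \eqref{b}, and of the final concentration statement is correct and essentially coincides with the paper's proof: \eqref{a} is read off from the identity \eqref{numero}; \eqref{b} follows by splitting $\RN$ into $B_R$ and its complement, using the smallness of $b$ outside and interpolation (or Rellich) inside; and the last claim is the standard application of \cite[Lemma I.1]{L} — vanishing would force $u_n\to 0$ in $L^{p+1}(\RN)$, contradicting Corollary \ref{co:p+1} — with \eqref{l2br} then forcing $|y_n|\to +\infty$. Contrary to what you write, this last step is not the crux of the lemma; it is its most routine part.

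The genuine gap is in your proof of \eqref{l2br}, the one point where you depart from the paper. From $\l_n\irn a(x)u_n^2\le C$ and $\l_n\to+\infty$ you correctly get $\irn a(x)u_n^2\to 0$, and, by Fatou along an a.e.\ convergent subsequence, $\irn a(x)u_0^2=0$, i.e.\ $u_0=0$ a.e.\ on $\{a>0\}$. But the next assertion — that this ``combined with $\lim_{|x|\to\infty}a(x)=0$'' yields $u_0\equiv 0$ — is a non sequitur: the decay of $a$ at infinity says nothing about the set $\{a=0\}$, which under \eqref{it:h1} alone may contain an open ball, and there the condition $\irn a\,u_0^2=0$ places no constraint on $u_0$. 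The implication you need is in fact false without local positivity of $a$: if $a\equiv 0$ on some ball $B_2(z_0)$, take a bump $\zeta\ge 0$, $\zeta\neq 0$, supported in $B_1(z_0)$, set $v_n=\zeta+\e_n w$ with $\e_n\to 0$ so fast that $\l_n\e_n^2\to 0$ (so that $\l_n\irn a\,v_n^2=\l_n\e_n^2\irn a\,w^2\to 0$), and let $u_n=t_{\l_n}(v_n)\,v_n\in \Ne_{\l_n}$ be the Nehari projection; these $u_n$ are positive, have bounded energy, and converge in $L^2_{\mathrm{loc}}$ to a nonzero multiple of $\zeta$, so no weak-limit argument can conclude $u_0\equiv 0$ there. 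Any correct proof of \eqref{l2br} must use that $a$ is locally bounded away from zero: this is precisely what the paper's terse deduction (``the divergence of $\l_n$ and \eqref{a} give \eqref{l2br}'') implicitly relies on, via the direct estimate $\int_{B_R}u_n^2\le \big(\essinf_{B_R}a\big)^{-1}\irn a\,u_n^2\le C\big(\l_n \essinf_{B_R}a\big)^{-1}$, and your detour through the weak limit neither invokes nor replaces that property. (A smaller point: your argument is subsequence-based, so even where it works you need the usual subsequence principle to obtain \eqref{l2br} for the full sequence.)
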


\begin{proof}
Since $(u_n)_n$ verifies \eqref{numero}, 
we obtain \eqref{a}. The divergence of $\l_n$ and \eqref{a} give \eqref{l2br}.
\\
Now, to show \eqref{b}, let us observe that, to any $\eps>0$, there corresponds an $R>0$ such that $0\le b(x)\le \eps$, for all $x\in \RN \setminus B_R$, hence we get
\[
\int_{\RN \setminus B_R} b(x)|u_n|^{p+1}\le C \eps.
\]
On the other hand, by interpolation and \eqref{l2br} we have
\[
\int_{B_R} b(x)|u_n|^{p+1}
\le C \left(\int_{B_R} u_n^2\right)^{\frac{\a(p+1)}2}\left(\int_{B_R} |u_n|^{2^*}\right)^{\frac{(1-\a)(p+1)}{2^*}} \xrightarrow[n\to +\infty]{} 0,
\]
where $\a \in (0,1)$, therefore, \eqref{b} follows.
\\
Lastly, 
\[
\lim_n \sup_{y\in \RN} \int_{B_1(y)}u_n^2=0
\]
cannot happen, because,  by \cite[Lemma I.1]{L}, it would imply that $u_n \to 0 $ in $L^{p+1}(\RN)$ contradicting \eqref{211}. 
Therefore there exist a positive constant $c$ and a sequence $(y_n)_n$, $y_n \in \RN$ such that 
\[
\int_{B_1(y_n)}u_n^2\ge c>0,
\]
and considering \eqref{l2br}, we conclude that $|y_n|\to +\infty$.
\end{proof}

We end this section with two lemmas dealing with asymptotic estimates: the first one is well known and concerns the exponential decay of elliptic problem solutions, the proof can be essentially found in \cite{S} and we present it for sake of completeness;  the second one can be proved arguing as in the proof of Proposition 1.2 of~\cite{BaLio}.

\begin{lemma}\label{le:exp}
Let $u\in \H$ be a solution of 
\[
\left\{
\begin{array}{ll}
	-\Delta u +\a (x)u=\b(x)|u|^{p-1}u, &\hbox{in }\RN,
	\\
	u\in \H,
\end{array}
\right.
\]
where $\a (x)\to a_\infty$ and $\b(x)\to b_\infty$, as $|x|\to +\infty$. Then, for any $0<\s<a_\infty$,  there exists $C>0$ such that
\beq\label{2101}
|u(x)|\le C e^{-\sqrt{\s}|x|}, \qquad\hbox{for all }x\in \RN.
\eeq
\end{lemma}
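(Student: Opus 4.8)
The plan is to reduce the semilinear equation to a \emph{linear} one with a potential that tends to $a_\infty$ at infinity, and then to dominate $|u|$ by the explicit exponential supersolution $\phi(x):=e^{-\sqrt{\s}|x|}$ through a comparison (maximum principle) argument on an exterior domain.

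First I would record the preliminary regularity and decay facts. By a standard bootstrap argument (Brezis--Kato together with $W^{2,q}$ and Schauder estimates), and using $1<p<2^*-1$, the solution $u$ is bounded and of class $C^2$, and, since $u\in\H$, one has $u(x)\to 0$ as $|x|\to+\infty$; this last point follows from the local sup-estimate $\|u\|_{L^\infty(B_1(y))}\le C\big(\|u\|_{L^2(B_2(y))}+\cdots\big)$ combined with $\int_{|x|>R}u^2\to 0$ as $R\to+\infty$. Writing the equation as the linear problem
\[
-\Delta u+V(x)u=0,\qquad V(x):=\a(x)-\b(x)|u(x)|^{p-1},
\]
the decay $u(x)\to0$ together with $\a(x)\to a_\infty$ and $\b(x)\to b_\infty$ gives $V(x)\to a_\infty$. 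Hence, having fixed $\s\in(0,a_\infty)$, there is $R>0$ such that $V(x)\ge\s$ for all $|x|>R$.

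Next I would verify that $\phi$ is a supersolution of $-\Delta+\s$ on $\{|x|>R\}$: a direct computation in polar coordinates gives, for $|x|>0$,
\[
-\Delta\phi+\s\phi=\frac{(N-1)\sqrt{\s}}{|x|}\,\phi\ge 0.
\]
Choosing $C:=e^{\sqrt{\s}R}\max_{|x|=R}|u|$ ensures $C\phi\ge|u|$ on the sphere $|x|=R$. Since both $u$ and $-u$ solve the same linear equation, the functions $w^\pm:=C\phi\mp u$ satisfy $-\Delta w^\pm+V(x)w^\pm\ge 0$ on the exterior domain $\{|x|>R\}$ (using $V\ge\s$ and $\phi>0$), are nonnegative on the boundary $|x|=R$, and tend to $0$ at infinity. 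The weak maximum principle on the unbounded domain then yields $w^\pm\ge0$: were $w^\pm$ to attain a negative interior minimum at some $x_0$, one would have $-\Delta w^\pm(x_0)\le0$ while $V(x_0)w^\pm(x_0)<0$, contradicting the supersolution inequality. Thus $|u(x)|\le C e^{-\sqrt{\s}|x|}$ for $|x|>R$, and on the compact ball $\overline{B_R}$ the estimate is trivial after enlarging $C$, since $e^{-\sqrt{\s}|x|}\ge e^{-\sqrt{\s}R}$ there.

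I expect the main obstacle to be making the maximum-principle step rigorous on the \emph{unbounded} exterior domain: the comparison function must genuinely dominate $u$ along the whole boundary, and, crucially, both $u$ and the supersolution must vanish at infinity so that a putative negative minimum of $w^\pm$ is forced to occur at an interior point where the differential inequality can be contradicted. This is exactly why establishing $u(x)\to0$ at infinity (hence $V\to a_\infty$) is the genuine analytic input; once that is in hand, the sign-indefiniteness of $u$ is dealt with painlessly by running the argument simultaneously for $C\phi-u$ and $C\phi+u$.
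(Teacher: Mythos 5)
Your proof is correct, and it shares the paper's overall strategy: first establish by bootstrap that $u(x)\to 0$ as $|x|\to+\infty$, then dominate $u$ on an exterior region by an exponentially decaying supersolution of $-\Delta+\s$ via the weak maximum principle. The implementations differ in genuine ways, though. The paper splits $u$ into $u^+$ and $u^-$ and runs the comparison on the positivity set $\O^+\cap B_R^c$ (the boundary portion $\de\O^+$ being harmless since $u^+=0$ there), keeping $\s$ inside the operator and pushing the nonlinearity to the right-hand side; you instead keep $u$ whole, rewrite the equation in the linear form $-\Delta u+V(x)u=0$ with $V=\a-\b|u|^{p-1}\to a_\infty$, and absorb the sign issue into the pair $w^\pm=C\phi\mp u$ — an equivalent but tidier device that avoids working on the possibly irregular open set $\O^+$. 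More substantively, the paper's comparison function is the fundamental radial solution $\g$ of $-\Delta v+\s v=0$ on $B_R^c$, whose asymptotics $\g(r)\sim r^{-(N-1)/2}e^{-\sqrt{\s}r}$ are quoted from \cite{BS} (formula \eqref{69}), whereas yours is the explicit function $e^{-\sqrt{\s}|x|}$, verified to be a supersolution by a one-line computation in polar coordinates. Your choice is more elementary and self-contained: it yields \eqref{2101} directly, without invoking the asymptotics of the radial fundamental solution, at the cost of forgoing the polynomial gain $r^{-(N-1)/2}$ — which is irrelevant here, since $\s<a_\infty$ is arbitrary. Finally, you are more explicit than the paper about why the maximum principle is legitimate on the unbounded domain: the decay of both $u$ and $\phi$ at infinity forces a putative negative infimum of $w^\pm$ to be attained at an interior point, where $V\ge\s>0$ gives the contradiction; the paper compresses this into the phrase ``by the weak maximum principle.'' Both arguments are sound; yours trades the cited ODE asymptotics for an explicit barrier.
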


\begin{proof}
The proof relies on some ideas of \cite{CM10,S}.
\\
Arguing as in \cite[Proof of Theorem 1.4]{CM10}, we first study by a bootstrapping procedure the regularity of $u$ and we show that
\beq\label{eq:step1}
u(x)\to 0, \quad \hbox{ as }|x|\to +\infty.
\eeq
Then, we prove \eqref{2101} for $u^+$, the positive part of $u$. The arguments for $u^-$ are similar.
\\
Let $\s $ belong to $(0,a_\infty)$. The function $u^+$ solves
\beq \label{67}
\left\{
\begin{array}{ll}
-\Delta u +\s u =\b(x)u^p-\big(\a(x)-\s \big)u& \hbox{in }\O^+,
\\
u\in H^1_0(\O^+),
\end{array}
\right.
\eeq
where $\O^+=\{x\in \RN :   u(x)>0\}$. If $\O^+$ is bounded, the claim is trivial. So we 
suppose $\O$ unbounded. By \eqref{eq:step1}, there exists a number $R > 0$ such that
\[
\big(\a(x)-\s \big)u^+ -\b(x)(u^+)^p>0,\quad \forall x \in \O^+\cap B_R^c.
\]
Let us denote by $\g$ the fundamental radial solution of
\beq \label{68}
\left\{
\begin{array}{ll}
-\Delta v +\s v =0& \hbox{in }B_R^c,
\\
v(R)=\max_{|x|=R}u^+.
\end{array}
\right.
\eeq
It is well known (see e.g. \cite{BS}) that
\beq \label{69}
\g (r)|r|^{\frac{N-1}{2}}e^{\sqrt{\s}r}\to c>0, \quad \hbox{ as }|r|\to +\infty.
\eeq 
Observe that (\ref{67}) and (\ref{68}) imply that $\eta:= \g - u^+$ solves
\[
-\Delta \eta+ \s \eta=\big(\a(x)-\s)u^+ -\b(x)(u^+)^p>0,\quad \hbox{ in }\O^+\cap B_R^c.
\]
Then, by the weak maximum principle,
\[
\inf_{\O^+\cap B_R^c} \eta\ge \inf_{\de(\O^+\cap B_R^c)}\eta\ge 0,
\]
hence, 
\[
0\le u^+(x)\le \g (x) \quad\hbox{ for all }x\in  B_R^c.
\]
This, together with \eqref{69}, proves the lemma.
\end{proof}

\begin{lemma}\label{riccardo}
If $g\in L^\infty(\RN)$ and $h\in L^1(\RN)$ are such that, for some $\a \ge0$, $b\ge 0$, $\g \in \R$
\[
\lim_{|x|\to +\infty}g(x)e^{\a |x|}|x|^b=\g 
\quad
\hbox{ and } \quad
\irn |h(x)|e^{\a |x|}|x|^b<+\infty,
\]
then, for every $z\in \RN \setminus\{0\}$,
\[
\lim_{\rho\to +\infty} \left(\irn g(x+\rho z)h(x)\right) e^{\a |\rho z|}|\rho z|^b
=\g \irn h(x)e^{-\a (x \cdot z)/|z|}.
\]
\end{lemma}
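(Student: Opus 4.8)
The plan is to prove an asymptotic factorization statement: as $\rho \to +\infty$, the convolution-type integral $\irn g(x+\rho z)h(x)\,dx$ behaves like $\g\, e^{-\a|\rho z|}|\rho z|^{-b}$ times a constant determined by $h$ and the direction $z/|z|$. The natural strategy is to multiply through by $e^{\a|\rho z|}|\rho z|^b$ and compute the limit of the resulting integrand pointwise, then justify passage to the limit under the integral sign by a dominated convergence argument. First I would write
\[
\left(\irn g(x+\rho z)h(x)\,dx\right)e^{\a|\rho z|}|\rho z|^b
=\irn \Big[g(x+\rho z)\,e^{\a|x+\rho z|}|x+\rho z|^b\Big]\,
\frac{e^{\a|\rho z|}|\rho z|^b}{e^{\a|x+\rho z|}|x+\rho z|^b}\,h(x)\,dx,
\]
so that the bracketed factor tends to $\g$ by the hypothesis on $g$, uniformly once $|x+\rho z|$ is large.

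The key computation is the limit of the remaining ratio. For fixed $x$ and $z\neq 0$, as $\rho\to+\infty$ one has $|x+\rho z|=\rho|z|+ (x\cdot z)/|z| + O(1/\rho)$, whence $e^{\a|\rho z|-\a|x+\rho z|}\to e^{-\a(x\cdot z)/|z|}$ and $(|\rho z|/|x+\rho z|)^b\to 1$. Thus the full integrand converges pointwise to $\g\, h(x)\, e^{-\a(x\cdot z)/|z|}$, which is exactly the claimed integrand. Combining this pointwise limit with the hypothesis $g\in L^\infty$ and $h\in L^1$ gives the target value once interchange of limit and integral is justified.

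The main obstacle, and the only genuinely delicate point, is producing an integrable dominating function so that dominated convergence applies. Here I would split $\RN$ into the region where $|x+\rho z|$ is small (a bounded set, on which $|g|\le\|g\|_\infty$ and $|x+\rho z|^b$ stays bounded, so the contribution is controlled by $\|g\|_\infty\,e^{\a|\rho z|}|\rho z|^b\int_{\text{bdd}}|h|$, which I must check is negligible) and the complementary region where $|x+\rho z|$ is large. On the large region the factor $g(x+\rho z)e^{\a|x+\rho z|}|x+\rho z|^b$ is bounded by some constant $M$ (from the existence of the limit $\g$), and the ratio $e^{\a|\rho z|}|\rho z|^b / (e^{\a|x+\rho z|}|x+\rho z|^b)$ must be dominated by a fixed multiple of $e^{\a|x|}|x|^b$ (up to lower-order corrections), using the triangle inequality $|\rho z|\le |x+\rho z|+|x|$; since $h$ satisfies $\irn|h(x)|e^{\a|x|}|x|^b<+\infty$, this yields an integrable majorant $c\,|h(x)|\,e^{\a|x|}|x|^b$ independent of $\rho$.

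With the dominating function in hand, dominated convergence gives
\[
\lim_{\rho\to+\infty}\left(\irn g(x+\rho z)h(x)\,dx\right)e^{\a|\rho z|}|\rho z|^b
=\g\irn h(x)\,e^{-\a(x\cdot z)/|z|}\,dx,
\]
which is the assertion. The care needed lies entirely in handling the polynomial weight $|x|^b$ alongside the exponential (controlling $|x+\rho z|^b$ versus $|\rho z|^b$ uniformly in $\rho$) and in confirming that the small-$|x+\rho z|$ region does not contribute; both are routine once the triangle-inequality bound $|\rho z| - |x| \le |x+\rho z| \le |\rho z| + |x|$ is exploited to separate the $\rho$-dependence from the $x$-dependence. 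The reference to the analogous argument in Proposition 1.2 of \cite{BaLio} indicates that this dominated-convergence scheme is the intended route.
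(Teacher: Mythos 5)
Your argument is correct and is essentially the intended one: the paper does not prove Lemma \ref{riccardo} directly but defers to Proposition 1.2 of \cite{BaLio}, whose proof is exactly this scheme of factoring out the weight $e^{\a|x+\rho z|}|x+\rho z|^b$, computing the pointwise limit via $|x+\rho z|-|\rho z|\to (x\cdot z)/|z|$, and applying dominated convergence after splitting off the region where $|x+\rho z|$ is small. The two points you flag as delicate are indeed the only ones, and they are resolved just as you indicate: on $\{|x+\rho z|\ge 1\}$ the triangle inequality gives the majorant $c\,|h(x)|e^{\a|x|}(1+|x|)^b$, while on $\{|x+\rho z|<1\}$ one has $|x|\ge \rho|z|-1$, so the contribution is controlled by the weighted tail $\int_{|x|\ge\rho|z|-1}|h|e^{\a|x|}|x|^b\to 0$ (this step genuinely needs the weighted integrability of $h$, not merely $h\in L^1$).
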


\section{Properties of the map $\l \mapsto m_\l $}\label{se:ml}

We start this section showing a monotonicity property of the map $\l \mapsto m_\l $.

\begin{proposition}\label{pr:mono}
Let  \eqref{it:h1} and \eqref{it:h2} hold. The map $\l\in \R^+ \mapsto m_\l $ is monotone non-decreasing.
\end{proposition}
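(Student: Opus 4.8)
The plan is to use the fibering (ray) characterization of $m_\l$ provided by Lemma \ref{le:nehari}. By part (a) of that lemma, for every $u\in\H\setminus\{0\}$ the projection $t_\l(u)u$ onto $\Ne_\l$ realizes $\max_{t\ge0}I_\l(tu)$, and since every element of $\Ne_\l$ arises this way, one has
\[
m_\l=\inf_{u\in\H\setminus\{0\}}\ \max_{t\ge0}I_\l(tu).
\]
Hence it suffices to compare, for two parameters $\l_1\le\l_2$, the maxima of $I_{\l_1}$ and $I_{\l_2}$ along each fixed ray $\{tu:t\ge0\}$.

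The decisive observation is that, because $a\ge0$ by \eqref{it:h1}, for every fixed $u\in\H\setminus\{0\}$ and every $t\ge0$
\[
I_{\l_2}(tu)-I_{\l_1}(tu)=\frac{t^2}{2}(\l_2-\l_1)\irn a(x)u^2\ge0,
\]
so that $\l\mapsto I_\l(tu)$ is non-decreasing at each point of the ray. Passing to the maximum over $t\ge0$ preserves this inequality, giving $\max_{t\ge0}I_{\l_1}(tu)\le\max_{t\ge0}I_{\l_2}(tu)$ for every such $u$.

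Concretely, I would fix $\l_1\le\l_2$, take an arbitrary $u\in\Ne_{\l_2}$ and set $v:=t_{\l_1}(u)u\in\Ne_{\l_1}$. Then
\[
m_{\l_1}\le I_{\l_1}(v)=\max_{t\ge0}I_{\l_1}(tu)\le\max_{t\ge0}I_{\l_2}(tu)=I_{\l_2}(u),
\]
where the last equality holds because $u\in\Ne_{\l_2}$ is itself the maximum point of its own ray. Taking the infimum over $u\in\Ne_{\l_2}$ yields $m_{\l_1}\le m_{\l_2}$.

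This argument has no genuinely hard step: everything is driven by the sign condition $a\ge0$, which turns the $\l$-dependent term into a monotone perturbation of $I_\l$ along each ray. The only points deserving a line of care are recording the identity for $I_{\l_2}(tu)-I_{\l_1}(tu)$ above (equivalently, using the closed form
\[
\max_{t\ge0}I_\l(tu)=\left(\frac12-\frac1{p+1}\right)\frac{\big(\|u\|^2+\l\irn a(x)u^2\big)^{\frac{p+1}{p-1}}}{\big(\irn(b_\infty+b(x))|u|^{p+1}\big)^{\frac{2}{p-1}}},
\]
whose numerator is manifestly non-decreasing in $\l$ since $a\ge0$), and checking that the infimum over rays really reproduces $m_\l$, which is exactly the content of Lemma \ref{le:nehari}(a) together with the Nehari identity \eqref{213}.
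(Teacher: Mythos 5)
Your proof is correct and takes essentially the same approach as the paper: both reduce the comparison of $m_{\l_1}$ and $m_{\l_2}$ to comparing the maxima of $I_{\l_1}$ and $I_{\l_2}$ along each ray $\{tu : t\ge 0\}$, with the sign condition $a\ge 0$ doing all the work. The only cosmetic difference is that you get $\max_{t\ge 0}I_{\l_1}(tu)\le\max_{t\ge 0}I_{\l_2}(tu)$ from pointwise domination along the ray, whereas the paper writes the same inequality as $I_{\l_1}(t_{\l_1}(u)u)\le I_{\l_2}(t_{\l_2}(u)u)$ via the explicit formula for the projection $t_\l(u)$ and its monotonicity in $\l$.
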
 

\begin{proof}
Let $u\in \H\setminus \{0\}$, $\l \in \R^+$ and $t_\l(u)$ be such that $t_\l(u) u\in \Ne_\l $, namely
\[
[t_\l(u)]^{p-1}=\frac{\|u\|^2+\l \irn a(x)u^2}{b_\infty\|u\|^{p+1}_{p+1}+\irn b(x)|u|^{p+1}}.
\] 
Clearly, if $\l_1, \l_2\in \R^+$ are such that $\l_1<\l_2$, then $t_{\l_1}(u)\le t_{\l_2}(u)$,
so
\begin{align*}
I_{\l_1}(t_{\l_1} u)
&=\left(\frac12 -\frac1{p+1}\right)[t_{\l_1}(u)]^{2}\left[\|u\|^2+\l_1 \irn a(x)u^2\right]
\\
&\le \left(\frac12 -\frac1{p+1}\right)[t_{\l_2}(u)]^{2}\left[\|u\|^2+\l_2 \irn a(x)u^2\right]
=I_{\l_2}(t_{\l_2}u).
\end{align*}
Therefore, by the arbitrariness of $u$, we conclude that $m_{\l_1}\le m_{\l_2}$.
\end{proof}

\begin{remark}\label{re:tl}
Let $u\in \H\setminus \{0\}$ and $\l_1, \l_2\in \R^+$ be such that $\l_1<\l_2$. Then 
\[
t_{\l_1}(u)= t_{\l_2}(u) \ \hbox{ if and only if }\ \irn a(x)u^2=0.
\]
\end{remark}

\begin{proposition}\label{pr:mbar}
Let  \eqref{it:h1} and \eqref{it:h2} hold.
If $\bar \l \in \R^+$ is such that $m_{\bar \l}=m_\infty$, then, for all $\l >\bar \l $, $m_\l =m_\infty$ and, moreover, it is not achieved.
\end{proposition}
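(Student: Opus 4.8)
The plan is to prove the two assertions separately: the equality $m_\l=m_\infty$ follows by a soft monotonicity argument, while the non-attainment is obtained by comparing the energy of a hypothetical minimizer with that of its projection onto $\Ne_{\bar\l}$. For the equality, fix $\l>\bar\l$. The monotonicity of $\l\mapsto m_\l$ (Proposition~\ref{pr:mono}) gives $m_\l\ge m_{\bar\l}=m_\infty$, while Proposition~\ref{pr:<} gives $m_\l\le m_\infty$; hence $m_\l=m_\infty$. This part is immediate.

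For the non-attainment I would argue by contradiction, assuming $m_\l$ is achieved by some $u\in\Ne_\l$ with $\l>\bar\l$. A minimizer on $\Ne_\l$ is a constrained, hence (Lemma~\ref{le:nehari}\,c)) a free, critical point of $I_\l$, so it solves \eqref{eql}; since $I_\l(u)=m_\l<2m_\l$, Lemma~\ref{le:24ter} forces $u$ not to change sign, so I may take $u\ge0$, and the strong maximum principle then yields $u>0$. Because $a\ge0$ and $a\neq0$ by \eqref{it:h1}, this delivers the crucial strict positivity $\irn a(x)u^2>0$, which is exactly what distinguishes the strict comparison below from an equality.

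The heart of the argument is then the projection comparison. Using the explicit formula for $t_\mu(u)$ from the proof of Proposition~\ref{pr:mono}, the strict positivity of $\irn a\,u^2$ together with $\bar\l<\l$ gives
\[
[t_{\bar\l}(u)]^{p-1}
=\frac{\|u\|^2+\bar\l\irn a(x)u^2}{b_\infty\|u\|_{p+1}^{p+1}+\irn b(x)|u|^{p+1}}
<\frac{\|u\|^2+\l\irn a(x)u^2}{b_\infty\|u\|_{p+1}^{p+1}+\irn b(x)|u|^{p+1}}
=[t_\l(u)]^{p-1}=1,
\]
so $t_{\bar\l}(u)<1$ (this strictness is precisely Remark~\ref{re:tl}). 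Evaluating the energies and using both $t_{\bar\l}(u)<1$ and $\bar\l<\l$, I would obtain
\[
m_{\bar\l}\le I_{\bar\l}\big(t_{\bar\l}(u)\,u\big)
=\Big(\tfrac12-\tfrac1{p+1}\Big)[t_{\bar\l}(u)]^2\Big[\|u\|^2+\bar\l\irn a(x)u^2\Big]
<\Big(\tfrac12-\tfrac1{p+1}\Big)\Big[\|u\|^2+\l\irn a(x)u^2\Big]
=I_\l(u)=m_\l,
\]
which contradicts $m_{\bar\l}=m_\infty=m_\l$. The only delicate point is justifying $\irn a\,u^2>0$, i.e.\ that a minimizer is genuinely positive and cannot vanish on the support of $a$; this is where the non-sign-changing property (Lemma~\ref{le:24ter}) and the strong maximum principle enter. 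Everything else reduces to the strict monotonicity of the projection $t_\mu(u)$ in $\mu$.
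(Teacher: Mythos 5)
Your proof is correct and follows essentially the same argument as the paper: the equality comes from Propositions \ref{pr:mono} and \ref{pr:<}, and non-attainment from the strict projection comparison $t_{\bar\l}(u)<1$ (via Remark \ref{re:tl} and $\irn a(x)u^2>0$), giving $m_{\bar\l}\le I_{\bar\l}\big(t_{\bar\l}(u)\,u\big)<I_\l(u)=m_\infty$, a contradiction. The only cosmetic difference is how positivity is secured: you route through Lemma \ref{le:nehari}\,c), Lemma \ref{le:24ter} and the strong maximum principle, whereas the paper simply replaces the minimizer $u$ by $|u|$ (which lies in $\Ne_\l$ with the same energy) and then applies the maximum principle.
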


\begin{proof}
Let $\l >\bar \l$ be fixed. 
\\
The equality $m_\l =m_\infty$ is a direct consequence of Propositions \ref{pr:<} and \ref{pr:mono}: indeed we have
\[
m_\infty=m_{\bar \l}\le m_{\l}\le m_\infty.
\]
Let us now prove that $m_\l $ is not achieved. 
\\
Arguing by contradiction, we suppose that $u_\l \in \Ne_\l $ exists such that $\Il(u_\l)=m_\l =m_\infty$. 
Furthermore we can assume that $u_\l >0$, otherwise we can replace it by $|u_\l|$, because $|u_\l|\in \Ne_{\l}$ and 
$\Il(|u_\l|)=\Il(u_\l)=m_\l =m_\infty$, then the maximum principle implies that $|u_\l|>0$.
\\
Let $t_{\bar \l}=t_{\bar \l}(u_\l)>0$ be such that $t_{\bar \l}u_\l\in \Ne_{\bar \l}$. Arguing as in Proposition \ref{pr:mono} and 
considering Remark \ref{re:tl}, we get $t_{\bar \l}< 1$, thus
\[
m_\infty=m_{\bar \l}\le I_{\bar \l}(t_{\bar \l} u_\l)< I_{\l}( u_\l)=m_\infty,
\]
reaching a contradiction.
\end{proof}


An immediate consequence of Proposition \ref{pr:mbar} is the following
\begin{corollary}
Let  \eqref{it:h1} and \eqref{it:h2} hold. There exists at most a unique $\bar \l\in \R^+$ such that $m_{\bar \l}=m_\infty$ and it is achieved.
\end{corollary}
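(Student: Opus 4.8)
The plan is to combine the two preceding results---Proposition~\ref{pr:mbar} and Proposition~\ref{pr:mono}---to pin down the uniqueness. The corollary asserts that among all $\l\in\R^+$, there is at most one value $\bar\l$ for which the infimum $m_{\bar\l}$ simultaneously equals $m_\infty$ \emph{and} is achieved. The strategy is to argue by contradiction: suppose two distinct values $\bar\l_1<\bar\l_2$ both satisfy $m_{\bar\l_i}=m_\infty$ with the infimum achieved.

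First I would observe that since $m_{\bar\l_1}=m_\infty$ at the smaller value $\bar\l_1$, Proposition~\ref{pr:mbar} applies directly with $\bar\l=\bar\l_1$: for every $\l>\bar\l_1$, and in particular for $\l=\bar\l_2$, one has $m_{\bar\l_2}=m_\infty$ \emph{and $m_{\bar\l_2}$ is not achieved}. This contradicts the standing assumption that $m_{\bar\l_2}$ is achieved. Hence no two such values can exist, and uniqueness follows.

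The only mild subtlety---and the step I would treat most carefully---is the degenerate case where the two candidate values are not strictly ordered, i.e.\ making sure the argument does not secretly require $\bar\l_1\ne\bar\l_2$ in a way that breaks down. But this is automatic: if the two values are distinct we may relabel so that $\bar\l_1<\bar\l_2$ and apply Proposition~\ref{pr:mbar} as above; if they coincide there is nothing to prove. So the real content is entirely carried by Proposition~\ref{pr:mbar}, whose conclusion ``$m_\l=m_\infty$ is not achieved for $\l>\bar\l$'' is exactly the obstruction that forbids a second achieved value. I do not expect any genuine obstacle here---the corollary is a clean logical packaging of Proposition~\ref{pr:mbar}, and the proof is a two-line contradiction argument rather than a new estimate.
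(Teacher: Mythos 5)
Your proof is correct and is exactly the argument the paper has in mind: the paper states the corollary as an ``immediate consequence'' of Proposition~\ref{pr:mbar}, and your two-line contradiction (apply Proposition~\ref{pr:mbar} at the smaller of two candidate values to conclude the larger one cannot be achieved) is precisely how that consequence is realized.
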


Let us define 
\beq \label{l*}
\l ^*:=\sup\{\l \in \R^+:m_\l <m_\infty\}.
\eeq 

\begin{proposition}\label{pr:l*}
Suppose that $\l ^*<+\infty$, then  
\beq \label{F}
m_{\l^*} =m_\infty
\eeq 
and the following equality holds
\beq\label{351}
\sup\{\l \in \R^+:m_\l <m_\infty\}=\min\{\l \in \R^+:m_\l =m_\infty\}.
\eeq 
\end{proposition}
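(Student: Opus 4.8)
The plan is to combine the monotonicity of $\l\mapsto m_\l$ (Proposition \ref{pr:mono}), the a priori bound $m_\l\le m_\infty$ (Proposition \ref{pr:<}), and, decisively, the fact that a strict inequality $m_\l<m_\infty$ forces $m_\l$ to be attained (Proposition \ref{pr:22'}). Write $A:=\{\l\in\R^+:m_\l<m_\infty\}$, so that $\l^*=\sup A$; note $A\ni 0$ by the proof of Proposition \ref{pr:23}, and by monotonicity $A$ is a down-set, hence an interval of the form $[0,\l^*)$ or $[0,\l^*]$. For every $\l>\l^*$ one has $\l\notin A$, i.e.\ $m_\l\ge m_\infty$, which together with $m_\l\le m_\infty$ yields $m_\l=m_\infty$. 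The entire content of the statement is therefore to pin down the value of $m_{\l^*}$, and the equality \eqref{351} will then drop out of the monotonicity.

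To establish \eqref{F} I would argue by contradiction, supposing $m_{\l^*}<m_\infty$. Then $\l^*\in A$ and Proposition \ref{pr:22'} supplies a positive minimizer $u^*\in\Ne_{\l^*}$ with $I_{\l^*}(u^*)=m_{\l^*}$. The strategy is to test the neighbouring functionals with the projection of $u^*$: for $\l\ge\l^*$ let $t_\l:=t_\l(u^*)>0$ be the unique number with $t_\l u^*\in\Ne_\l$, given by
\[
t_\l^{p-1}=\frac{\|u^*\|^2+\l\irn a(x)(u^*)^2}{b_\infty\|u^*\|_{p+1}^{p+1}+\irn b(x)|u^*|^{p+1}}.
\]
This is affine, hence continuous, in $\l$, and $t_{\l^*}=1$ since $u^*\in\Ne_{\l^*}$. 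Therefore, using \eqref{213},
\[
I_\l(t_\l u^*)=\left(\frac12-\frac1{p+1}\right)t_\l^2\left[\|u^*\|^2+\l\irn a(x)(u^*)^2\right]
\]
is continuous in $\l$ and takes at $\l=\l^*$ the value $I_{\l^*}(u^*)=m_{\l^*}<m_\infty$. By continuity there exists $\l>\l^*$ with $I_\l(t_\l u^*)<m_\infty$, so that $m_\l\le I_\l(t_\l u^*)<m_\infty$; but then $\l\in A$ with $\l>\l^*=\sup A$, a contradiction. Hence $m_{\l^*}=m_\infty$, which is \eqref{F}.

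It remains to check that $\l^*$ is the least $\l$ with $m_\l=m_\infty$. For any $\l<\l^*$ the definition of supremum yields $\mu\in A$ with $\l<\mu\le\l^*$, and monotonicity gives $m_\l\le m_\mu<m_\infty$; thus no $\l<\l^*$ satisfies $m_\l=m_\infty$. Combined with \eqref{F}, this shows that $\l^*=\min\{\l\in\R^+:m_\l=m_\infty\}$, that is, \eqref{351}.

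The single delicate point is the continuity step at $\l^*$, and it rests entirely on the existence of a genuine minimizer there (Proposition \ref{pr:22'}). Because $u^*\in\Ne_{\l^*}$ forces $t_{\l^*}=1$, the projected energies $I_\l(t_\l u^*)$ move continuously away from $m_{\l^*}$ as $\l$ crosses $\l^*$, and the strict gap $m_{\l^*}<m_\infty$ leaves a margin to keep them below $m_\infty$ just past $\l^*$. No compactness beyond what is already recorded is required, so once the projection formula is written out I expect no substantial difficulty.
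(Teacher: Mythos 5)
Your proof is correct and follows essentially the same route as the paper: under the contradiction hypothesis $m_{\l^*}<m_\infty$, invoke Proposition \ref{pr:22'} to get a minimizer $u^*\in\Ne_{\l^*}$, project it onto $\Ne_\l$ for $\l>\l^*$, and use continuity of $\l\mapsto I_\l(t_\l(u^*)u^*)$ together with $t_{\l^*}(u^*)=1$ to contradict the definition of $\l^*$, with \eqref{351} then following from monotonicity. The only cosmetic difference is that the paper runs the continuity step along a sequence $\l_n\searrow\l^*$ using $m_{\l_n}=m_\infty$, whereas you phrase the same contradiction as producing an element of $\{\l:m_\l<m_\infty\}$ strictly above its supremum.
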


\begin{proof}
Assume by contradiction that $m_{\l^*} <m_\infty$. Then there exists, therefore, $u^*\in \Ne_{\l ^*}$ such that $I_{\l ^*}(u_{\l ^*})=m_{\l ^*}$.
Let $(\ln)_n$ be a sequence of numbers such that $\ln \searrow \l ^*$. By definition of $\l ^*$ \eqref{l*}, for all $n\in \N$, $m_{\ln}=m_\infty$.  
Moreover, denoting, for any $n\in \N$, $t_n:=t_{\ln}(u_{\l ^*})$ the number such that $t_n u_{\l ^*}\in \Ne_{\ln}$, 
in view of definition of $t_n$, we infer that $t_n\to 1$, as $n\to +\infty$. Therefore we get
\begin{multline*}
m_\infty=m_{\ln}
\le \Iln(t_n u_{\l ^*})
=\left(\frac 12 -\frac1{p+1}\right)t_n^2\left[\|u_{\l ^*}\|^2+\ln\irn a(x)u_{\l ^*}^2\right]
\\
\xrightarrow[n\to +\infty]{}\left(\frac 12 -\frac1{p+1}\right)\left[\|u_{\l ^*}\|^2+\l^*\irn a(x)u_{\l ^*}^2\right]
=I_{\l ^*}(u_{\l ^*})=m_{\l ^*}<m_\infty,
\end{multline*}
reaching a contradiction. Finally \eqref{351} is a consequence of \eqref{F} and Proposition \ref{pr:mbar}.
\end{proof}

The following proposition shows the continuity of the map $\l \in \R^+ \mapsto m_\l $.
\begin{proposition}\label{pr:cont}
Let  \eqref{it:h1} and \eqref{it:h2} hold. 
Then the map $\l \in \R^+ \mapsto m_\l $ is continuous.
\end{proposition}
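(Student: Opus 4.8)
The plan is to prove continuity by establishing upper and lower semicontinuity of $\l \mapsto m_\l$ separately at an arbitrary fixed point $\l_0 \in \R^+$, working with a sequence $\l_n \to \l_0$. The central device throughout will be the projection map: for any fixed $u \in \H \setminus \{0\}$, the scalar $t_\l(u)$ defined by $[t_\l(u)]^{p-1} = \big(\|u\|^2 + \l \irn a(x)u^2\big)\big/\big(b_\infty\|u\|_{p+1}^{p+1} + \irn b(x)|u|^{p+1}\big)$ depends continuously (indeed smoothly) on $\l$, since it is an explicit continuous function of $\l$. Consequently, for fixed $u$, the energy $I_\l(t_\l(u)u) = \big(\tfrac12 - \tfrac1{p+1}\big)[t_\l(u)]^2\big(\|u\|^2 + \l\irn a(x)u^2\big)$ is continuous in $\l$ as well. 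This is the same computation already used in Proposition~\ref{pr:mono} and Proposition~\ref{pr:l*}, so I would lean on it heavily.

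First I would prove upper semicontinuity, $\limsup_n m_{\l_n} \le m_{\l_0}$, which is the easy direction. Fix $\eps > 0$ and choose $u \in \Ne_{\l_0}$ with $I_{\l_0}(u) \le m_{\l_0} + \eps$; equivalently pick any $u \in \H\setminus\{0\}$ whose $\Ne_{\l_0}$-projection nearly realizes the infimum. Since $t_{\l_n}(u) \to t_{\l_0}(u) = 1$ and the energy of the projection is continuous in $\l$, we get $m_{\l_n} \le I_{\l_n}(t_{\l_n}(u)u) \to I_{\l_0}(u) \le m_{\l_0} + \eps$. Letting $n \to \infty$ and then $\eps \to 0$ gives the claim. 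This argument is entirely analogous to the chain of inequalities displayed in the proof of Proposition~\ref{pr:l*}.

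The harder direction is lower semicontinuity, $\liminf_n m_{\l_n} \ge m_{\l_0}$, and I expect this to be the main obstacle. The plan is to take near-minimizers $u_n \in \Ne_{\l_n}$ with $I_{\l_n}(u_n) \le m_{\l_n} + 1/n$; by the uniform bound $m_{\l_n} \le m_\infty$ from Proposition~\ref{pr:<} together with Lemma~\ref{le:un<c1}, the sequence $(u_n)_n$ is bounded in $\H$. Now project each $u_n$ back onto $\Ne_{\l_0}$ via $s_n := t_{\l_0}(u_n)$, so that $s_n u_n \in \Ne_{\l_0}$ and hence $m_{\l_0} \le I_{\l_0}(s_n u_n)$. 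The key estimate is that $s_n \to 1$ uniformly enough and that the difference $I_{\l_0}(s_n u_n) - I_{\l_n}(u_n)$ tends to zero. Writing both energies in the reduced form $\big(\tfrac12 - \tfrac1{p+1}\big)[t]^2\big(\|u_n\|^2 + \l\irn a u_n^2\big)$, one sees that $[s_n]^{p-1} = [t_{\l_n}(u_n)]^{p-1}\cdot\frac{\|u_n\|^2 + \l_0\irn a u_n^2}{\|u_n\|^2 + \l_n\irn a u_n^2}$, and since $u_n \in \Ne_{\l_n}$ forces $t_{\l_n}(u_n)=1$ while the ratio tends to $1$ as $\l_n \to \l_0$ (using boundedness of $\|u_n\|^2$ and of $\irn a u_n^2$, the latter controlled by $\|a\|_{N/2}$ and Sobolev embedding), we conclude $s_n \to 1$. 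Feeding this into the reduced energy gives
\[
m_{\l_0} \le I_{\l_0}(s_n u_n) = \Big(\tfrac12 - \tfrac1{p+1}\Big)[s_n]^2\big(\|u_n\|^2 + \l_0\textstyle\irn a u_n^2\big) = I_{\l_n}(u_n) + o(1) \le m_{\l_n} + o(1).
\]
Taking $\liminf$ yields $m_{\l_0} \le \liminf_n m_{\l_n}$. The only delicate point is verifying the $o(1)$ uniformly in $n$, which rests on the uniform upper bound $I_{\l_n}(u_n) \le m_\infty + 1$ and the resulting $\H$-boundedness; this is exactly where Lemma~\ref{le:un<c1} and hypothesis~\eqref{it:h1} enter. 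Combining the two semicontinuity estimates establishes continuity at $\l_0$, and since $\l_0$ was arbitrary the map is continuous on all of $\R^+$.
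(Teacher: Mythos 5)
Your proof is correct, and the core computations (the explicit projection $t_\l(u)$ and the reduced energy $\bigl(\tfrac12-\tfrac1{p+1}\bigr)t^2\bigl(\|u\|^2+\l\irn a(x)u^2\bigr)$) are exactly those the paper uses; but the architecture is genuinely different. The paper's proof splits into cases according to whether $\l^*$, defined in \eqref{l*}, is finite or infinite: its lower-semicontinuity step takes \emph{actual} minimizers $u_n\in\Ne_{\l_n}$ with $I_{\l_n}(u_n)=m_{\l_n}$, whose existence requires $m_{\l_n}<m_\infty$ and rests on the concentration-compactness result (Proposition \ref{pr:22'}); where minimizers may fail to exist ($\l>\l^*$) the paper instead invokes the constancy of $\l\mapsto m_\l$ beyond $\l^*$ (Propositions \ref{pr:mbar} and \ref{pr:l*}), and at $\l=\l^*$ it patches the two regimes with near-minimizers from the left. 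You sidestep all of this by working with near-minimizers $I_{\l_n}(u_n)\le m_{\l_n}+1/n$ throughout, which exist unconditionally, so no case analysis and no appeal to Proposition \ref{pr:22'} or to the structure of $\l^*$ is needed; your argument uses only Lemma \ref{le:un<c1}, hypothesis \eqref{it:h1}, and the uniform bounds of Corollary \ref{co:p+1}. What the paper's route buys is economy given its context (two of its three cases are nearly free once the $\l^*$ structure is established); what yours buys is a self-contained, uniform proof that would survive even if the compactness results were unavailable. One small point you should make explicit: in concluding $s_n\to 1$ you divide by $\|u_n\|^2+\l_n\irn a(x)u_n^2\ge \|u_n\|^2$, and you need this bounded away from zero uniformly in $n$; this is exactly relation \eqref{211} (equivalently Corollary \ref{co:p+1}), which holds with a constant independent of $\l$ and of the point on the Nehari manifold.
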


\begin{proof}
We divide the proof in several steps, analyzing all the possible cases.  
\medskip
\\
{\em Case 1: $\l ^*=+\infty$}. 
\\
Let $\l \in \R^+$. Being $m_{\l}<m_\infty$, by Proposition \ref{pr:22'}, $u_{\l}\in \Ne_{\l}$
exists such that $I_{\l}(u_{\l})=m_{\l}$. 
Let $(\ln)_n$ be such that $\ln\to  \l $. For all $n$, let $t_n:=t_{\ln}(u_{\l})$ be such that $t_n u_{\l}\in \Ne_{\ln}$
then, using the definition of $t_n$, we get that $t_n\to 1$, as $n\to +\infty$, hence
\begin{multline*}
m_{\ln}
\le \Iln(t_n u_{ \l})
=\left(\frac 12 -\frac1{p+1}\right)t_n^2\left[\|u_{ \l}\|^2+\ln\irn a(x)u_{ \l}^2\right]
\\
\xrightarrow[n\to +\infty]{} \left(\frac 12 -\frac1{p+1}\right)\left[\|u_{ \l}\|^2+ \l\irn a(x)u_{ \l}^2\right]
=I_{ \l}(u_{ \l})=m_{ \l}.
\end{multline*}
Therefore we obtain
\beq \label{limsup}
\limsup_n m_{\ln}\le m_{ \l}.
\eeq
Since, for all $n\in \N$, $m_{\ln}<m_\infty$, $u_n\in \Ne_{\ln}$ exists by Proposition \ref{pr:22'} such that $\Iln(u_n)=m_{\ln}$ and by 
Lemma \ref{le:un<c1}, we can assert that the sequence $(u_n)_n$ is bounded in $\H$.
Let $\widehat{ t}_n:=t_{ \l}(u_n)$ be such that $\widehat{ t}_n u_n\in \Ne_{ \l}$. Being
\begin{align*}
1&= \frac{\|u_n\|^2+\ln \irn a(x)u_n^2}{b_\infty\|u_n\|^{p+1}_{p+1} +\irn b(x)|u_n|^{p+1}},
\\
(\widehat{ t}_n)^{p-1}&= \frac{\|u_n\|^2+ \l \irn a(x)u_n^2}{b_\infty\|u_n\|^{p+1}_{p+1} +\irn b(x)|u_n|^{p+1}},
\end{align*}
we deduce $ \widehat t_n\to 1$ and $|I_{ \l}( \widehat t_n u_n)- m_{\ln}|\to 0$, as $n\to +\infty$.
Thus we get
\[
m_{ \l}\le \liminf_n m_{\ln},
\]
that, together with \eqref{limsup}, brings to the conclusion.
\medskip
\\
{\em Case 2: $\l ^*\in \R^+$.}
\\
For any $\l <\l ^*$, we can argue as in Case 1. 
On the other hand, $\l  \mapsto m_\l $ is a constant map  in $(\l ^*,+\infty)$.  
Therefore, we need only to prove the continuity at $\l =\l ^*$.
\\
Let $(\ln)_n$ be a sequence of numbers such that $\ln\to \l ^*$. By \eqref{F}, if $\ln \searrow\l ^*$, 
the conclusion is trivial. Assume, therefore, that $\ln \nearrow\l ^*$. 
By \eqref{ml}, fixing arbitrarily $\e>0$, $u_\e\in \Ne_{\l ^*}$ can be found such that $I_{\l^*}(u_{\e})<m_{\l^*}+\e$. 
Then, denoting, for all $n$, by $t_{n,\e}:=t_{\ln}(u_{\e})$ the number such that $t_{n,\e} u_{\e}\in \Ne_{\ln}$,
we infer that $t_{n,\e}\to 1$, as $n\to +\infty$, and moreover
\begin{multline*}
m_{\ln}
\le \Iln(t_{n,\e} u_{\e})
=\left(\frac 12 -\frac1{p+1}\right)t_{n,\e}^2\left[\|u_{\e}\|^2+\ln\irn a(x)u_{\e}^2\right]
\\
\xrightarrow[n\to +\infty]{} \left(\frac 12 -\frac1{p+1}\right)\left[\|u_{\e}\|^2+\l^*\irn a(x)u_{\e}^2\right]
=I_{\l^*}(u_{\e})<m_{\l^*}+\e,
\end{multline*}
that gives
\[
\limsup_n m_{\ln}\le m_{\l^*}+\e.
\]
Hence, by the arbitrariness of $\e$, we obtain
\[
\limsup_n m_{\ln}\le m_{\l^*}.
\]
On the other hand, being $m_{\ln}<m_\infty$, for all $n$, we can argue as in Case 1 showing that 
\[
m_{\l^*}\le \liminf_n m_{\ln}.
\]
This conclude the proof.
\end{proof}

\begin{remark}
We observe that $\l ^*>0$, by the continuity of the map $\l\mapsto m_\l $ and the fact that $m_0<m_\infty$.
\end{remark}

\section{Proof of the results}\label{se:proofs}

\subsection{The case in which $a$ decays faster than $b$}

\

\begin{proposition}\label{pr:<2}
Let  \eqref{it:h1}, \eqref{it:h2} and \eqref{ABforte1} hold and let $\l ^*$ be the number defined in \eqref{l*}.
Then $\l ^*=+\infty$.
\end{proposition}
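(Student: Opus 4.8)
The plan is to establish the stronger fact that $m_\l<m_\infty$ for \emph{every} $\l\in\R^+$; since $\l^*=\sup\{\l: m_\l<m_\infty\}$, this forces $\l^*=+\infty$. The case $\l=0$ is Proposition~\ref{pr:23}, so I fix $\l>0$ and look for a point at which $I_\l$ dips strictly below $m_\infty$ on $\Ne_\l$. Exactly as in Proposition~\ref{pr:<} I would use the projected translates $u_y:=t_\l(w_y)w_y\in\Ne_\l$, for which the energy formula \eqref{213} gives
\[
I_\l(u_y)=\left(\frac12-\frac1{p+1}\right)\frac{A(y)^{\frac{p+1}{p-1}}}{B(y)^{\frac{2}{p-1}}},\qquad A(y):=\|w\|^2+P(y),\quad B(y):=\|w\|^2+Q(y),
\]
where $P(y):=\l\irn a(x+y)w^2\ge0$, $Q(y):=\irn b(x+y)w^{p+1}\ge0$, and I used $b_\infty\|w\|_{p+1}^{p+1}=\|w\|^2$ (so that $m_\infty=\left(\frac12-\frac1{p+1}\right)\|w\|^2$). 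After dividing by the constant, raising to the power $p-1$ and normalizing by $\|w\|^{2(p+1)}$, the desired inequality $I_\l(u_y)<m_\infty$ becomes equivalent to the elementary one
\[
\Big(1+\tfrac{P(y)}{\|w\|^2}\Big)^{p+1}<\Big(1+\tfrac{Q(y)}{\|w\|^2}\Big)^{2}.
\]

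The core of the argument is to prove that $P(y)=o(Q(y))$ as $|y|\to\infty$ along a fixed ray $y=\rho z$, $|z|=1$, $\rho\to+\infty$, and it is precisely here that $\b<\min\{2,\a\}$ is used. For the lower bound on $Q$, the slow-decay assumption on $b$ in \eqref{ABforte1}, combined with Lemma~\ref{riccardo} (applied with $g=b$, $h=w^{p+1}$ and exponent $\b\sqrt{a_\infty}$, legitimate because $p+1>2>\b$ makes $\irn w^{p+1}e^{\b\sqrt{a_\infty}|x|}<\infty$) or directly Fatou's lemma, yields
\[
\liminf_{\rho\to+\infty}Q(\rho z)\,e^{\b\sqrt{a_\infty}\rho}\ge c\irn w^{p+1}(x)\,e^{-\b\sqrt{a_\infty}(x\cdot z)}\,dx>0,
\]
hence $Q(\rho z)\ge c_1 e^{-\b\sqrt{a_\infty}\rho}$ for large $\rho$. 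For the upper bound on $P$, I would fix $\b'$ with $\b<\b'<\min\{2,\a\}$ and $\eta>0$ with $\b'<2-\eta$, use the decay $w^2(x)\le Ce^{-(2-\eta)\sqrt{a_\infty}|x|}$ (which absorbs the polynomial factor in \eqref{wdecay}), and estimate, after the change of variable $u=x+y$ and the triangle inequality $|y|\le|u|+|u-y|$,
\[
e^{\b'\sqrt{a_\infty}|y|}\irn a(x+y)w^2\,dx\le C\irn a(u)\,e^{\b'\sqrt{a_\infty}|u|}\,e^{-(2-\eta-\b')\sqrt{a_\infty}|u-y|}\,du\le C\irn a(u)\,e^{\a\sqrt{a_\infty}|u|}\,du<\infty,
\]
where the last two steps use $2-\eta-\b'>0$ and then $\b'<\a$ together with \eqref{ABforte1}. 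Thus $P(y)\le C\l\,e^{-\b'\sqrt{a_\infty}|y|}$ uniformly in $y$, and since $\b'>\b$ we obtain $P(\rho z)/Q(\rho z)\to0$.

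With $P=o(Q)$ and $Q\to0$, the left-hand side of the elementary inequality equals $1+o(Q)$ while the right-hand side equals $1+\tfrac{2}{\|w\|^2}Q+o(Q)$, so it holds for all $\rho$ large enough; consequently $m_\l\le I_\l(u_{\rho z})<m_\infty$. Since $\l>0$ was arbitrary, $m_\l<m_\infty$ for every $\l\in\R^+$, and therefore $\l^*=+\infty$. I expect the main obstacle to be the sharp upper estimate on $P$: when $\a<2$ the relevant decay rate is governed by $\a$ rather than by $w^2$, and only the integrability of $a\,e^{\a\sqrt{a_\infty}|\cdot|}$ is available (no pointwise asymptotic for $a$), so Lemma~\ref{riccardo} cannot be invoked for the $a$-integral and the triangle-inequality splitting above is what delivers the rate $\b'<\min\{2,\a\}$. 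A secondary technical point is handling the lower bound on $b$ through Fatou's lemma when \eqref{ABforte1} provides only a $\liminf$-type control.
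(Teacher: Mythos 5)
Your proof is correct and is essentially the paper's own argument: both test $I_\l$ on the Nehari projections of far translates of $w$ and win by comparing decay rates, lower-bounding the $b$-term by $c\,e^{-\b\sqrt{a_\infty}\rho}$ and upper-bounding the $a$-term by an exponential with strictly larger rate, which is exactly where $\b<\min\{2,\a\}$ enters (your closed-form reduction to $(1+P/\|w\|^2)^{p+1}<(1+Q/\|w\|^2)^2$ and the paper's bound $I_\infty(t_n w_{y_n})\le m_\infty$ plus direct estimation of the perturbation are just two packagings of the same computation). One correction to your closing commentary: Lemma~\ref{riccardo} \emph{can} be invoked for the $a$-integral — the paper does so with $h=a$, of which only integrability of $a\,e^{\a\sqrt{a_\infty}|\cdot|}$ is required, and with $g$ an exponential majorant of $w^2$, which supplies the pointwise asymptotics, yielding the clean rate $e^{-\min\{2,\a\}\sqrt{a_\infty}|y_n|}$ — though your triangle-inequality substitute, with its harmless loss from $\min\{2,\a\}$ to $\b'$, is equally valid.
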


\begin{proof}
	We first observe that, by Proposition \ref{pr:23}, we already know that $m_0<m_\infty$, so in what follows we can assume $\l >0$.
	In order to show that, for all $\l >0$, $m_\l <m_\infty$, we fix arbitrarily a $\l >0$ and we test $\Il$ by functions $u_n\in  \Ne_\l$, $u_n=t_n w_{y_n}$ where $y_n$ and $t_n$ are as in the proof of Proposition \ref{pr:<}.
	By \eqref{22'}, we can assert that, up to a subsequence, $t_n\ge \bar c >0$.
	Now, we have
	\begin{align*}
	m_\l &\le I_\l(u_n)
	=I_\infty(t_{n}w_{y_n})
	+t_{n}^2 \irn \left(\frac{\l}2 a(x+y_n)w^2-\frac{t_{n}^{p-1}}{p+1}b(x+y_n)w^{p+1}\right)
	\\
	&\le I_\infty(w_{y_n})
	+t_{n}^2 \irn \left(\frac{\l}2 a(x+y_n)w^2-c\ b(x+y_n)w^{p+1}\right)
	\\
	&= m_\infty
	+t_{n}^2 \irn \left(\frac{\l}2 a(x+y_n)w^2-c\ b(x+y_n)w^{p+1}\right).
	\end{align*}
	Thus, we get the conclusion if we show that, for large $n$,
	\beq\label{int<0}
	\irn \left(\frac{\l}2 a(x+y_n)w^2-c\ b(x+y_n)w^{p+1}\right)<0.
	\eeq
First let us observe that, by \eqref{ABforte1},
\beq\label{intb}
\irn b(x+y_n)w^{p+1}
\ge  \int_{B_1} b(x+y_n)w^{p+1}
\ge \inf_{x\in B_1} b(x+y_n) \int_{B_1} w^{p+1}
\ge c e^{-\b\sqrt{a_\infty}|y_n|}.
\eeq
Then let us show that 
\beq \label{min2a}
	\irn a(x+y_n)w^2\le c_1 e^{-\min\{2,\a \}\sqrt{a_\infty}|y_n|}. 
\eeq
Indeed, if $\a \le 2$, by the exponential decay \eqref{wdecay} of $w$, we have $w^2(x)\le C e^{-\a \sqrt{a_\infty}|x|}$, for all $x\in \RN$, and moreover, by \eqref{ABforte1},  $a\in L^1(\RN)$. So, by Lemma \ref{riccardo}, we have
\[
\irn  a(x+y_n)w^2 \le C \irn  a(x+y_n)e^{-\a \sqrt{a_\infty}|x|}\le C e^{-\a \sqrt{a_\infty}|y_n|}.
\]
On the other hand, when $2<\a $, thanks to assumption \eqref{ABforte1}
\[
\irn a(x)e^{2\sqrt{a_\infty}|x|}<+\infty,
\]
and we can again apply Lemma \ref{riccardo}, obtaining
\[
\irn  a(x+y_n)w^2 \le C\irn  a(x+y_n)e^{-2 \sqrt{a_\infty}|x|}\le C e^{-2 \sqrt{a_\infty}|y_n|}.
\]
Thus \eqref{min2a} holds true.
\\
%
%
Since $\b <\min\{2,\a \}$, \eqref{int<0} follows by \eqref{intb} and \eqref{min2a}.
\end{proof}

\begin{proof}[Proof of Theorem \ref{th:<}]
It is an immediate consequence of Proposition \ref{pr:22'} and Proposition \ref{pr:<2}.
\end{proof}

\subsection{The case in which $a$ decays slower or equal to $b$}

\

To prove Theorem \ref{th:seconda}, we need more work. We start by a proposition that is basic to obtain the first part of the claim.

\begin{proposition}\label{pr:=}
Let \eqref{it:h1}, \eqref{it:h2}  and \eqref{ABforte2} hold and let $\l ^*$ be the number defined in \eqref{l*}.
Then $\l ^*\in \R^+$.
\end{proposition}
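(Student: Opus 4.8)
The plan is to argue by contradiction: I suppose $\l^*=+\infty$ and show that this forces $\l_n$ to stay bounded along a sequence $\l_n\to+\infty$. First note that $\l^*=+\infty$ entails $m_\l<m_\infty$ for \emph{every} $\l\ge0$; otherwise $m_{\l_0}=m_\infty$ for some $\l_0$, and Proposition \ref{pr:mbar} would give $m_\l=m_\infty$ for all $\l\ge\l_0$, so that $\{\l:m_\l<m_\infty\}\subseteq[0,\l_0]$ would be bounded. Hence, by Proposition \ref{pr:22'}, each $m_\l$ is attained by a positive $u_\l\in\Ne_\l$. Fixing $\l_n\to+\infty$ and setting $u_n:=u_{\l_n}$, we have $u_n>0$, $u_n\in\Ne_{\l_n}$, $I_{\l_n}(u_n)=m_{\l_n}<m_\infty$, and Lemmas \ref{le:un<c1}--\ref{le:un<c2} provide: $(u_n)_n$ bounded in $\H$, $\irn b(x)|u_n|^{p+1}\to0$, $\l_n\irn a(x)u_n^2\le C$, and points $y_n$ with $|y_n|\to+\infty$ and $\int_{B_1(y_n)}u_n^2\ge c>0$.

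The first step is an energy comparison obtained by projecting $u_n$ onto $\Ne_\infty$. Writing $\t(u_n)u_n\in\Ne_\infty$ for this projection, we have $m_\infty\le I_\infty(\t(u_n)u_n)$ while $I_{\l_n}(u_n)<m_\infty$. Both energies are expressed through the identities valid on the respective Nehari manifolds: $I_{\l_n}(u_n)=(\tfrac12-\tfrac1{p+1})(\|u_n\|^2+\l_n\irn a u_n^2)$ and $[\t(u_n)]^{p-1}=\|u_n\|^2/(b_\infty\|u_n\|^{p+1}_{p+1})$, with $b_\infty\|u_n\|^{p+1}_{p+1}=\|u_n\|^2+\l_n\irn a u_n^2-\irn b|u_n|^{p+1}$ because $u_n\in\Ne_{\l_n}$. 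Feeding these into $I_{\l_n}(u_n)<I_\infty(\t(u_n)u_n)$, dividing by $\|u_n\|^2$, and using $\|u_n\|\ge c$ and $\irn b|u_n|^{p+1}\to0$, we obtain
\[
\l_n\irn a(x)u_n^2\le C\irn b(x)|u_n|^{p+1}\qquad\text{for all large }n .
\]
In particular $\l_n\irn a u_n^2\to0$, whence the rescaled minimizers $v_n:=u_n(\cdot+y_n)$ converge to the ground state $w$ of \eqref{eqi}: the perturbation $\l_n a(\cdot+y_n)$ vanishes in the limit, and since $m_{\l_n}<m_\infty<2m_\infty$ only a single bump survives.

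Next I estimate the two sides of this inequality through the decay information \eqref{ABforte2}. For the left-hand side, the bound $a(x)\ge c\,e^{-\a\sqrt\s|x|}$ for large $|x|$ (first part of \eqref{ABforte2}) together with $\int_{B_1(y_n)}u_n^2\ge c$ gives $\l_n\irn a(x)u_n^2\ge c\,\l_n\,e^{-\a\sqrt\s|y_n|}$. For the right-hand side I establish a uniform exponential decay $|u_n(x)|\le C\,e^{-\sqrt{\s'}|x-y_n|}$, valid for a fixed $\s'\in(\s,a_\infty)$ and $C$ independent of $n$ (a consequence of $v_n\to w$ and a barrier argument as in Lemma \ref{le:exp}, made uniform in $n$). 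Raising to the power $p+1$ and using $\a\le p+1$, $\s'>\s$ to absorb $e^{-(p+1)\sqrt{\s'}|x-y_n|}\le e^{-\a\sqrt\s|x-y_n|}\le e^{-\a\sqrt\s|y_n|}e^{\a\sqrt\s|x|}$, together with $\irn b(x)e^{\a\sqrt\s|x|}\le\irn b(x)e^{\b\sqrt\s|x|}<+\infty$ (valid since $\a\le\b$, by the second part of \eqref{ABforte2}), we obtain
\[
\irn b(x)|u_n|^{p+1}\le C\,e^{-\a\sqrt\s|y_n|}.
\]

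Combining the three displayed estimates gives $c\,\l_n\,e^{-\a\sqrt\s|y_n|}\le C\,e^{-\a\sqrt\s|y_n|}$, hence $\l_n\le C/c$, contradicting $\l_n\to+\infty$. Thus $\l^*<+\infty$, and since $m_0<m_\infty$ yields $\l^*>0$, we conclude $\l^*\in\R^+$. I expect the main obstacle to be the uniform-in-$n$ exponential decay of $u_n$ around its concentration point $y_n$: this is precisely what forces the $b$-interaction $\irn b|u_n|^{p+1}$ to decay no slower than the rate $e^{-\a\sqrt\s|y_n|}$ governed by $a$, and it rests on identifying the single limiting bump $w$ (allowed by $m_{\l_n}<2m_\infty$) and on a tail estimate uniform in $n$. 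Once this is in hand, the contradiction comes purely from the algebraic projection identity and the elementary integral bounds above.
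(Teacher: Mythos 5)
Your proposal is correct and takes essentially the same route as the paper's proof: argue by contradiction with diverging $\l_n$ and positive minimizers $u_n\in\Ne_{\l_n}$, use the Nehari projection onto $\Ne_\infty$ to get $\l_n\irn a(x)u_n^2\le C\irn b(x)|u_n|^{p+1}$ (the paper's inequality \eqref{eq:iuntn}), identify the translated limit profile $w$, and then play the two exponential estimates coming from \eqref{ABforte2} against each other, using $\a\le\min\{p+1,\b\}$ to force $\l_n$ bounded. The only differences are presentational: the paper compares $I_{\l_n}(\t_n u_n)$ with $I_\infty(\t_n u_n)$ and obtains the concentration points from the uniqueness of the ground state rather than from Lemma \ref{le:un<c2}, and the uniform-in-$n$ exponential decay you rightly flag as the key technical point is exactly what the paper invokes by applying Lemma \ref{le:exp} to $v_n$ after establishing $C^2_{\rm loc}$ convergence.
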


\begin{proof}
Suppose by contradiction that $\l ^*=+\infty$. Then, by Proposition \ref{pr:l*}, $m_{\l}<m_\infty$, for all $\l \in \R^+$.
Let $(\l_n)_n$ be a diverging sequence. By Proposition \ref{pr:22'} a sequence $(u_n)_n$ exists such that, for all $n\in \N$
\[ 
u_n > 0, \quad u_n\in \Ne_{\l_n},\quad I_{\l_n}(u_n)=m_{\l_n}<m_\infty \quad\hbox{and}\quad I_{\l_n}'(u_n)=0.
\]
By Proposition \ref{le:un<c1}, this sequence is bounded in $\H$.
Set $\t_n:=\t(u_n)$, i.e. $\t_n u_n\in \Ne_\infty$.
Let us observe first that
\begin{equation}\label{eq:iun}
I_{\l_n}(\t_n u_n)<I_{\infty}(\t_n u_n),
\end{equation} 
otherwise we would have
\[
m_\infty\le I_\infty(\t_n u_n)\le I_{\l_n}(\t_n u_n)\le  I_{\l_n}(u_n)=m_{\l_n}<m_\infty,
\]
and this is impossible.
\\
Moreover, by definition 
\[
\t_n^{p-1}=\frac{\|u_n\|^2}{\|u_n\|_{p+1}^{p+1}}, 
\]
so, by using Corollary \ref{co:p+1} and the boundedness of  $(\|u_n\|)_n$, we deduce that $\bar c, \bar C>0$ exist such that 
\beq \label{cC}
\bar c\le \t_n \le \bar C.
\eeq
Now, being \eqref{eq:iun} equivalent to 
\beq\label{eq:iuntn}
 \frac{\l_n}{2}\irn a(x)u_n^2
 -\frac{\t_n^{p-1}}{p+1}\irn b(x) |u_n|^{p+1}<0.
\eeq
in view of \eqref{b} and \eqref{cC}, we deduce
\beq\label{numero2}
\ln \irn a(x)u_n^2 \to 0, \quad \hbox{ as }n\to +\infty.
\eeq
Therefore, considering that $u_n \in \Ne_{\ln}$ and $\t_n u_n\in \Ne_\infty$, and using \eqref{b}, we obtain
\[
b_\infty (\t_n^{p-1}-1)\|u_n\|^{p+1}_{p+1}= \irn b(x)|u_n|^{p+1}-\ln \irn a(x)u_n^2=o_n(1),
\]
that, by Corollary \ref{co:p+1}, implies
\beq\label{tn1}
\lim_n \t_n =1.
\eeq
Hence, again using \eqref{b} and \eqref{numero2}, we have
\begin{align*}
m_\infty 
&> \Iln(u_n)\ge \Iln(\t_n u_n) 
\\
&=I_\infty(\t_n u_n)
+\frac{\t_n^2 \ln}2 \irn a(x)u_n^2
-\frac{\t_n^{p+1}}{p+1}\irn b(x)|u_n|^{p+1}
\\
&=I_\infty(\t_n u_n)+o_n(1)\ge m_\infty +o_n(1),
\end{align*}
that gives
\[
I_\infty(\t_n u_n) \to m_\infty, \quad\hbox{ as }n \to +\infty.
\]
By the uniqueness of the family of minimizers of $I_\infty$ on $\Ne_\infty$, then $(y_n)_n$ exists such that $y_n\in \RN$ and
\[
\t_n u_n -w_{y_n}\to 0\quad\hbox{ in }\H, \quad\hbox{ as }n\to +\infty.
\]
Thus, setting $v_n=u_n(\cdot+y_n)$, thanks to \eqref{tn1}, we deduce 
\[
v_n \to w\quad\hbox{ in }\H, \quad\hbox{ as }n\to +\infty.
\]
Since, for any $n$, $v_n$ is a solution of 
\[
-\Delta u+\big(a_\infty+\ln a(x+y_n)\big)u=\big(b_\infty+b(x+y_n)\big)u^{p+1}
\]
by virtue of the Schauder interior estimates (see e.g. \cite{serrin}), $v_n\to w$ locally in $C^2$ sense
and Lemma \ref{le:exp} applies to $v_n$.
\\
Now, to conclude, it is enough to prove the following

\medskip
\noindent
{\sc Claim:} for large $n$, the inequality
\beq\label{ivn}
 \frac{\l_n}{2}\irn a(x+y_n)v_n^2
 -\frac{3}{2(p+1)}\irn b(x+y_n) |v_n|^{p+1}>0
\eeq
holds true.
\\
Indeed, considering \eqref{tn1} and the definition of $v_n$, it is clear that, for large $n$, \eqref{ivn} contradicts \eqref{eq:iuntn}.
\\
By \eqref{ABforte2} and since $v_n \to w$ locally in $C^2$ sense, we have
\beq\label{av}
\irn a(x+y_n)v_n^2
\ge  \int_{B_1} a(x+y_n)v_n^2
\ge \inf_{x\in B_1} a(x+y_n) \int_{B_1} v_n^2
\ge c e^{-\a\sqrt{\s}|y_n|},
\eeq
while, by the exponential decay \eqref{2101} of $v_n$, arguing as in the proof of \eqref{min2a}, we have
\beq \label{minbp1}
	\irn b(x+y_n)|v_n|^{p+1}\le c e^{-\min\{p+1,\b \}\sqrt{\s}|y_n|}. 
\eeq
Therefore, since $\a \le \min\{p+1,\b \}$ and $(\ln)_n$ is a diverging sequence, \eqref{ivn} follows by \eqref{av} and \eqref{minbp1}.
\end{proof}

Now, we turn to build tools and topological variational techniques useful to prove the existence of an higher energy solution when \eqref{eql} has 
no ground state solutions.

First step is reminding  the definition of barycenter $\b $ 
of a function $u\in \H$, $u \neq 0$, given in \cite{CP03}. Setting
\begin{align*}
\mu(u)(x) &=
\frac 1 {| B_1 ( 0 )|} \int_{ B_1 ( x )}| u ( y ) | dy ,
\qquad
 \mu( u ) \in L^\infty(\RN)  \hbox{ and is continuous},
 \\
 \hat u ( x ) &= \left[
 \mu ( u )( x )- \frac 1 2 \max \mu ( u )( x )
 \right]^+, \qquad \hat u \in C_0(\RN);
 \end{align*}
 we define $\b : \H \setminus \{0\}\to \RN$ as
 \[
\b( u ) = \frac 1{|\hat u |_1} \irn x \hat u ( x ) dx \in \RN.
\]
Since $\hat u$ has compact support, $\b $ is well defined. Moreover the following properties hold:
\begin{enumerate}
\item $\b $ is continuous in $\H\setminus\{0\}$;
\item if $u$ is a radial function, $\b( u ) = 0$;
\item for all $t\neq 0$ and for all $u\in \H\setminus\{0\}$, $\b( tu ) = \b( u )$;
\item given $z\in \RN$ and setting $u_z ( x ) = u ( x -z )$ , $\b( u_z ) = \b( u )+ z$.
\end{enumerate}

Let 
\[
\mathcal{B}^\l_0
:=\inf\{\Il(u): u\in \Ne_\l, \b (u)=0\}.
\]

\begin{lemma}\label{le:mB}
Let \eqref{it:h1} and \eqref{it:h2} hold, let $\l \ge 0$ be fixed and moreover let $m_\l =m_\infty$ be not achieved. Then
\[
m_\l =m_\infty<\mathcal{B}^\l_0.
\]
\end{lemma}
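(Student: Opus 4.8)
The plan is to obtain the strict inequality by contradiction, combining the profile decomposition of Lemma \ref{le:comp} with the translation behaviour of the barycenter. Since $\{u\in\Ne_\l:\b(u)=0\}\subset\Ne_\l$, one has at once $\mathcal{B}^\l_0\ge m_\l=m_\infty$, so it suffices to rule out the equality $\mathcal{B}^\l_0=m_\infty$.

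Assume then $\mathcal{B}^\l_0=m_\infty$ and pick a minimizing sequence $(u_n)_n\subset\Ne_\l$ with $\b(u_n)=0$ and $\Il(u_n)\to m_\infty$. Because the constrained and unconstrained infima now coincide, $(u_n)_n$ is also a minimizing sequence for $\Il$ on $\Ne_\l$; applying Ekeland's variational principle on the $C^1$ manifold $\Ne_\l$ I would produce a (PS) sequence $(v_n)_n$ for $\Il$ constrained on $\Ne_\l$, at level $m_\infty$, with $\|v_n-u_n\|\to 0$. This is the step that lets me feed a barycenter-constrained minimizing sequence into the free representation lemma.

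Next I would apply Lemma \ref{le:comp} to $(v_n)_n$: up to a subsequence $v_n-\sum_{j=1}^{k}u^j(\cdot-y^j_n)\to\bar u$ in $\H$, with the energy identity $m_\infty=\Il(\bar u)+\sum_{j=1}^{k}I_\infty(u^j)$. If $\bar u\neq 0$, then $\bar u\in\Ne_\l$ forces $\Il(\bar u)\ge m_\l=m_\infty$, while each nontrivial $u^j$ satisfies $I_\infty(u^j)\ge m_\infty>0$; the identity then leaves only $k=0$ and $\Il(\bar u)=m_\l$, i.e. $m_\l$ would be achieved, against the hypothesis. Hence $\bar u=0$, and the same arithmetic gives exactly one nontrivial profile, $k=1$, with $I_\infty(u^1)=m_\infty$. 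Thus $u^1$ is a critical point of $I_\infty$ realising $m_\infty$, so by \eqref{31} it does not change sign and, by the uniqueness \cite{K} of the ground state, $u^1=\pm w(\cdot-\xi)$ for some $\xi\in\RN$. Setting $z_n:=\xi+y^1_n$, with $|z_n|\to+\infty$, I obtain $v_n=\pm w_{z_n}+o(1)$, and since $\|u_n-v_n\|\to 0$ also $u_n=\pm w_{z_n}+o(1)$ in $\H$.

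Finally I would translate back: the sequence $u_n(\cdot+z_n)\to\pm w$ in $\H$, so by continuity of the barycenter its value tends to $\b(\pm w)=0$, while the translation property $\b(u_z)=\b(u)+z$ gives $\b(u_n)=\b\big(u_n(\cdot+z_n)\big)+z_n=z_n+o(1)$. Hence $|\b(u_n)|\to+\infty$, contradicting $\b(u_n)=0$. I expect the delicate point to be precisely this transfer of information: the representation lemma applies to the free (PS) sequence $v_n$, and one must use the Ekeland approximation $\|u_n-v_n\|\to 0$ together with the \emph{exact} constraint $\b(u_n)=0$ to close the contradiction, accounting for the fact that the mass of $u_n$ escapes to infinity through the translation invariance of $\b$.
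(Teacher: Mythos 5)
Your proposal is correct and follows essentially the same route as the paper's proof: assume $\mathcal{B}^\l_0=m_\infty$, use Ekeland's variational principle to turn the zero-barycenter minimizing sequence into a constrained (PS) sequence, invoke the representation Lemma~\ref{le:comp} (whose compact alternative is excluded because $m_\l$ is not achieved) to conclude that the sequence is, up to $o(1)$, a translated ground state $\pm w_{z_n}$ with $|z_n|\to+\infty$, and derive a contradiction from the barycenter translation property. The only difference is that you spell out in full the steps the paper leaves implicit (why $\bar u=0$ and $k=1$, the use of \eqref{31} and Kwong's uniqueness, and the explicit computation $\b(u_n)=z_n+o(1)$), which is a faithful and complete elaboration rather than a different argument.
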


\begin{proof}
Assume, by contradiction, that a sequence $(u_n)_n$, $u_n\in \Ne_\l$ exists such that $\b (u_n)=0$ and $\Il(u_n)= m_\infty +o_n(1)$. 
By the Ekeland variational principle, we can assert the existence of a sequence of functions $(v_n)_n$ such that
\beq\label{*} 
v_n\in \Ne_\l,\ \Il(v_n)=m_\infty+o_n(1),\ \Il'(v_n)=o_n(1)\ \hbox{ and } \ |\b(v_n)-\b(u_n)|=o_n(1).
\eeq
Since $m_\l $ is not achieved, $(v_n)_n$ cannot be relatively compact and, by Lemma \ref{le:comp}, the equality
\[
u_n = w_{y_n}+o(1),
\]
must be true with $|y_n|\to +\infty$, contradicting \eqref{*}.
\end{proof}

Let $\xi \in \RN$ with $|\xi|=1$ and $\Sigma=\de B_2(\xi)$. We set 
\beq\label{bwdef}
\bw=\frac{w}{\|w\|_{p+1}},
\eeq
and for any $y\in \RN$, $\bw_y=\bw(\cdot -y)$. Observe that $\bw$ satisfies
\beq\label{bw}
-\Delta \bw +a_\infty \bw ={\bf M}\bw^{p},
\eeq
where, by a direct computation, one can verify that
\beq\label{bfm} 
{\bf M}=b_\infty^\frac2{p+1}\left(\frac{2(p+1)}{p-1}m_\infty\right)^\frac{p-1}{p+1}.
\eeq 

For any $\rho>0$ and $(z,s)\in \Sigma\times[0,1]$, let 
\[
\psi_\rho(z,s)=(1-s)\bw_{\rho z}+s \bw_{\rho \xi}.
\]
Moreover let $\Psi_\rho:\Sigma\times[0,1]\to \Ne_\l $ be so defined
\[
\Psi_\rho(z,s)= t_{z,s}^\l\psi_\rho(z,s),
\]
where $t_{z,s}^\l>0$ is such that $t_{z,s}^\l\psi_\rho(z,s)\in \Ne_\l $.

\begin{lemma}\label{le:B<=max}
Suppose that \eqref{it:h1} and \eqref{it:h2} hold and let $\l >0$ be fixed. Then for all $\rho>0$, we have
\[
\mathcal{B}^\l_0\le \mathcal{T}_\rho^\l :=\max_{\Sigma\times [0,1]}\Il(\Psi_\rho(z,s)).
\]
\end{lemma}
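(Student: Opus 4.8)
The plan is to exploit the scaling invariance of the barycenter to reduce the claim to a topological statement. Since $\b(tu)=\b(u)$ for every $t\neq0$ and $t^\l_{z,s}>0$, we have $\b(\Psi_\rho(z,s))=\b(\psi_\rho(z,s))$ for all $(z,s)\in\Sigma\times[0,1]$. Hence it suffices to produce a pair $(z_0,s_0)$ for which $\b(\psi_\rho(z_0,s_0))=0$: the corresponding function $\Psi_\rho(z_0,s_0)$ lies in $\Ne_\l$ and has zero barycenter, so it is admissible in the infimum defining $\mathcal{B}^\l_0$, whence
\[
\mathcal{B}^\l_0\le \Il(\Psi_\rho(z_0,s_0))\le \max_{\Sigma\times[0,1]}\Il(\Psi_\rho(z,s))=\mathcal{T}_\rho^\l.
\]

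First I would record that the map $F(z,s):=\b(\Psi_\rho(z,s))=\b(\psi_\rho(z,s))$ is well defined and continuous on $\Sigma\times[0,1]$. Indeed $\psi_\rho(z,s)=(1-s)\bw_{\rho z}+s\bw_{\rho\xi}$ is a strictly positive function, so in particular nonzero; its projection $t^\l_{z,s}$ onto $\Ne_\l$ is uniquely determined and depends continuously on $(z,s)$ by Lemma \ref{le:nehari}, and $\b$ is continuous on $\H\setminus\{0\}$. The endpoints of the homotopy are explicit: using that $\bw$ is radial and the translation rule for $\b$, one gets $F(z,0)=\b(\bw_{\rho z})=\rho z$ and $F(z,1)=\b(\bw_{\rho\xi})=\rho\xi$, the latter being constant in $z$.

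The heart of the proof is a degree argument showing $0\in F(\Sigma\times[0,1])$. Suppose not; then $F/|F|$ is a continuous map $\Sigma\times[0,1]\to S^{N-1}$, that is, a homotopy between $z\mapsto F(z,0)/|F(z,0)|=z/|z|$ and the constant map $z\mapsto \rho\xi/|\rho\xi|=\xi$. The constant map is null-homotopic, so $z\mapsto z/|z|$ would be null-homotopic as well. But $\Sigma=\de B_2(\xi)$ encloses the origin, because $|\xi|=1<2$; consequently every ray from $0$ meets the convex boundary $\Sigma$ exactly once, so the radial projection $z\mapsto z/|z|$ is a homeomorphism of $\Sigma$ onto $S^{N-1}$ and thus has degree $\pm1$, and it is not null-homotopic --- a contradiction. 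Therefore $F$ vanishes at some $(z_0,s_0)$, and the displayed chain of inequalities gives the claim.

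I expect the only delicate point to be this topological step, where one must correctly identify that $\Sigma$ winding around the origin forces the boundary map $z\mapsto z/|z|$ to have nonzero degree; the continuity of $t^\l_{z,s}$ and of $\b$, together with the computation of the two endpoints, are routine.
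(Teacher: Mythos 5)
Your proof is correct and takes essentially the same approach as the paper: the paper's one-sentence argument --- that $\b\circ\Psi_\rho(\Sigma\times\{0\})$ is homotopically equivalent to $\rho\Sigma$ in $\RN\setminus\{0\}$, hence cannot be contracted through the cylinder to the constant map at $s=1$ without the barycenter vanishing somewhere --- is precisely the degree argument you spell out. Your write-up merely makes explicit the steps the paper leaves implicit (invariance $\b(tu)=\b(u)$, continuity of the Nehari projection $t^\l_{z,s}$, the endpoint computations, and the fact that $z\mapsto z/|z|$ has degree $\pm1$ on $\Sigma$), so there is no gap.
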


\begin{proof}
Since $\b (\Psi_\rho(z,0))=\rho z$, we infer that $\b \circ\Psi_\rho(\Sigma\times \{0\})$ is homotopically equivalent in $\RN\setminus\{0\}$ to $\rho \Sigma$, 
then there exists $(\bar z, \bar s)\in \Sigma\times [0,1]$ such that $\b (\Psi_\rho (\bar z, \bar s))=0$ and, as consequence, 
\[
\mathcal{B}^\l_0
\le \Il (\Psi_\rho (\bar z, \bar s))
\le \mathcal{T}_\rho^\l .
\]
\end{proof}

\begin{lemma}\label{le:M2m}
Let the assumptions of Lemma \ref{le:B<=max} be true and suppose that \eqref{it:h3} holds. 
Then there exists $\rho_0$ such that, for all $\rho>\rho_0$,  
\[
\mathcal{T}_\rho^\l =\max_{\Sigma\times [0,1]}\Il(\Psi_\rho(z,s))<2m_\infty.
\]
\end{lemma}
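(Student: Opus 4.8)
The plan is to estimate $\Il(\Psi_\rho(z,s))$ uniformly over $(z,s)\in\Sigma\times[0,1]$ and show that for $\rho$ large this maximum falls strictly below $2m_\infty$. First I would exploit the projection structure: since $t^\l_{z,s}\psi_\rho(z,s)\in\Ne_\l$, we have the identity $\Il(\Psi_\rho(z,s))=\left(\frac12-\frac1{p+1}\right)[t^\l_{z,s}]^2\bigl[\|\psi_\rho\|^2+\l\irn a(x)\psi_\rho^2\bigr]$, so it suffices to control the two competing quantities $\|\psi_\rho(z,s)\|^2+\l\irn a(x)\psi_\rho^2$ (a gradient-type numerator) and $b_\infty\|\psi_\rho\|_{p+1}^{p+1}+\irn b(x)|\psi_\rho|^{p+1}$ (the denominator defining $t^\l_{z,s}$), since $[t^\l_{z,s}]^{p-1}$ is their ratio.

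The key geometric input is that $\Sigma=\de B_2(\xi)$ with $|\xi|=1$ has been chosen so that $0\notin\Sigma$ and, crucially, the two bump centers in $\psi_\rho(z,s)=(1-s)\bw_{\rho z}+s\bw_{\rho\xi}$ are \emph{separated}: for $z\in\Sigma$ we have $|\rho z-\rho\xi|=\rho|z-\xi|\ge 2\rho\,\dist(\xi,\Sigma)$, which diverges. I would therefore split the analysis by cases in $s$. At the endpoints $s=0$ and $s=1$, $\psi_\rho$ is a single translated bump $\bw_{\rho z}$ (resp.\ $\bw_{\rho\xi}$), so by the computations already used in Propositions \ref{pr:<} and \ref{pr:<2} together with Lemma \ref{riccardo}, $t^\l_{z,s}\to$ the right limit and $\Il(\Psi_\rho)\to m_\infty<2m_\infty$, with the perturbation terms $\irn a(x+\rho z)w^2$ and $\irn b(x+\rho z)w^{p+1}$ tending to zero. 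The interior $s\in(0,1)$ is where both bumps are present; here the widely separated centers make the cross terms in $\|\psi_\rho\|^2$ and $\|\psi_\rho\|_{p+1}^{p+1}$ exponentially small (using the decay \eqref{wdecay}), so to leading order $\|\psi_\rho\|^2\approx[(1-s)^2+s^2]\|\bw\|^2+2s(1-s)\cdot o(1)$ and similarly for the $L^{p+1}$ norm, which would push the energy of the projected sum toward the two-bump value $2m_\infty$ only in the degenerate limit $s\to$ the balanced configuration.

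The main obstacle, and the reason hypothesis \eqref{it:h3} enters, is controlling the term $\l\irn a(x)\psi_\rho^2$ against the gain from the interaction in the denominator, precisely near the balanced value of $s$ where the naive estimate only gives $\le 2m_\infty$ rather than the strict inequality. The plan is to show that the positive contribution $\l\irn a(x)\psi_\rho^2$, which tends to enlarge the numerator and hence the energy, is dominated by the interaction effect: the condition \eqref{it:h3}, namely $\irn a(x)|x|^{N-1}e^{2\sqrt{a_\infty}|x|}<+\infty$, is exactly what makes $\irn a(x)\bw^2_{\rho\xi}$ decay fast enough (at rate comparable to $|\rho\xi|^{-(N-1)}e^{-2\sqrt{a_\infty}\rho}$ via Lemma \ref{riccardo} and \eqref{wdecay}) to be absorbed. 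Combining the exponentially small overlap of the two bumps (which strictly lowers the sum of the two single-bump energies below their additive value $2m_\infty$) with the controlled smallness of the $a$-perturbation, I would conclude that there is $\rho_0$ such that for all $\rho>\rho_0$ and all $(z,s)$, $\Il(\Psi_\rho(z,s))<2m_\infty$ with a margin uniform in $s$. The delicate point is making the strict inequality uniform across the compact set $\Sigma\times[0,1]$, which I would handle by a compactness/continuity argument on the continuous map $(z,s,\rho)\mapsto\Il(\Psi_\rho(z,s))$ together with the above asymptotics.
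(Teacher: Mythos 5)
Your plan has the right skeleton (write $\Il(\Psi_\rho(z,s))$ as a power of the ratio $N_\rho^\l/(D_\rho^\l)^{2/(p+1)}$, expand in the bump interaction, use \eqref{it:h3} to make the $a$-terms negligible), but it has a genuine gap at the one point where the lemma is actually hard: you \emph{assert} that ``the exponentially small overlap of the two bumps \dots strictly lowers the sum of the two single-bump energies below their additive value $2m_\infty$'', and this is precisely the claim that needs proof. The interaction $\e_{\rho}=\irn \bw_{\rho z}^p\bw_{\rho \xi}\sim (2\rho)^{-\frac{N-1}2}e^{-2\sqrt{a_\infty}\rho}$ enters with a \emph{positive} sign both in the numerator, through the cross term $2s(1-s)(\bw_{\rho z},\bw_{\rho \xi})_{H^1}=2s(1-s){\bf M}\e_{\rho}$, and in the denominator, through the lower bound $D_\rho^\l(z,s)\ge [(1-s)^{p+1}+s^{p+1}]b_\infty+p[(1-s)^{p}s+(1-s)s^p]b_\infty \e_{\rho}$ (this is \cite[Lemma 2.7]{CP95}, a superadditivity estimate your plan never invokes). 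The first effect pushes the energy up, the second pushes it down, and which one wins is a question of comparing coefficients, not of smallness. The paper settles it by a Taylor expansion of the ratio, isolating the coefficient $\g(s)$ of $\e_{\rho}$ and checking that at $s=1/2$ the relevant bracket equals $1-\frac{2p}{p+1}=\frac{1-p}{p+1}<0$: it is only because $p>1$ that the nonlinear (denominator) gain beats the quadratic (numerator) loss and the energy dips strictly below $2m_\infty$. Without this computation, ``attractive interaction'' is an unsupported hypothesis; treating the cross terms as $2s(1-s)\cdot o(1)$, as your interior-$s$ discussion does, discards exactly the term whose sign decides the lemma.

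For the same reason, your proposed fix for uniformity --- ``a compactness/continuity argument on $(z,s,\rho)\mapsto \Il(\Psi_\rho(z,s))$'' --- cannot work. At $s=1/2$ the $\e_{\rho}$-independent part of the ratio equals exactly $2^{\frac{p-1}{p+1}}b_\infty^{-\frac2{p+1}}{\bf M}$, i.e.\ it corresponds exactly to $2m_\infty$; whether $\Il(\Psi_\rho(z,1/2))$ approaches $2m_\infty$ from below or from above as $\rho\to+\infty$ is decided by the sign of the order-$\e_{\rho}$ correction, which continuity and compactness cannot see. The paper instead splits $[0,1]$ into a neighborhood $\mathcal{I}_{1/2}$ of $1/2$, where $\g(s)<c<0$ gives the strict bound for large $\rho$, and its complement, where the profile $[(1-s)^2+s^2]/[(1-s)^{p+1}+s^{p+1}]^{\frac2{p+1}}$ is bounded away from its maximum $2^{\frac{p-1}{p+1}}$. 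One part of your plan is essentially correct and worth keeping: the $a$-terms have the \emph{same} exponential rate $e^{-2\sqrt{a_\infty}\rho}$ as $\e_{\rho}$ and are $o(\e_{\rho})$ only through the polynomial factor ($\rho^{-(N-1)}$ versus $\rho^{-\frac{N-1}2}$), which is exactly why \eqref{it:h3} carries the weight $|x|^{N-1}$; but this observation only protects the delicate balance described above, it does not create it.
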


\begin{proof}
The argument is quite similar to that used in \cite{CM03,CP95} so we only sketch the proof for sake of completeness and reader's convenience.
\\
Observe that
\[
\Il(\Psi_\rho(z,s))=\dfrac{p-1}{2(p+1)}
\left[
\dfrac{\dis \|\psi_\rho(z,s)\|^2+\l \irn a(x)\psi^2_\rho(z,s)}
{\dis \left( \irn (b_\infty+b(x))|\psi_\rho(z,s)|^{p+1}\right)^{\frac{2}{p+1}}}\right]^{\frac{p+1}{p-1}}.
\]
Let us evaluate
\begin{align*}
N_\rho^\l (z,s)
&:=\|\psi_\rho(z,s)\|^2+\l \irn a(x)\psi^2_\rho(z,s)
\\
&\ =(1-s)^2\|\bw_{\rho z}\|^2
+2s(1-s)(\bw_{\rho z},\bw_{\rho \xi})_{H^1}
+s^2\|\bw_{\rho \xi}\|^2
\\
&\quad+\l \left[(1-s)^2\irn a(x)\bw_{\rho z}^2
+2s(1-s)  \irn a(x)\bw_{\rho z}\bw_{\rho \xi}
+s^2 \irn a(x)\bw_{\rho \xi}^2\right].
\end{align*}
Since $\bw$ satisfies \eqref{bw}, $\|\bw_{\rho z}\|^2=\|\bw_{\rho \xi}\|^2={\bf M}$ and 
\[
(\bw_{\rho z},\bw_{\rho \xi})_{H^1}={\bf M}\irn \bw_{\rho z}^p\bw_{\rho \xi}
={\bf M}\irn \bw_{\rho z}\bw_{\rho \xi}^p.
\]
Therefore, by \cite[Proposition 1.2]{BL} and Proposition \ref{riccardo}  (see also \cite [Lemma 3.7]{ACR}), using \eqref{it:h3} and the fact that $|z|\ge 1$, we get
\begin{align*}
\e_{\rho}=\irn \bw_{\rho z}^p\bw_{\rho \xi}
&=\irn \bw_{\rho z}\bw_{\rho \xi}^p
\sim  |2\rho|^{-\frac{N-1}2}e^{-2\rho \sqrt{a_\infty}},
\\
\irn a(x)\bw_{\rho z}^2&
= o(\e_{\rho}),
\\
\irn a(x)\bw_{\rho \xi}^2&
= o(\e_{\rho}),
\end{align*}
and
\[
\irn a(x)\bw_{\rho z}\bw_{\rho \xi}\le c \left(\irn a(x)\bw_{\rho z}^2+\irn a(x)\bw_{\rho \xi}^2\right) = o(\e_{\rho}).
\]
Thus
\[
N_\rho^\l (z,s)=[(1-s)^2+s^2]{\bf M}
+2s(1-s){\bf M}\e_{\rho}+o(\e_{\rho}).
\]
Moreover, by \cite[Lemma 2.7]{CP95}, we have
\begin{align*}
D_\rho^\l (z,s)
&=\irn (b_\infty+b(x))|\psi_\rho(z,s)|^{p+1}
\\
&\ge [(1-s)^{p+1} +s^{p+1}]b_\infty
+p[(1-s)^{p}s+(1-s)s^p]b_\infty \e_{\rho }.
\end{align*}
Hence, by a Taylor expansion,
\[
\frac{N_\rho^\l (z,s)}{(D_\rho^\l (z,s))^{\frac2{p+1}}}
\le \frac 1{b_\infty^{\frac2{p+1}}}
\left(
\frac{[(1-s)^2+s^2]{\bf M}}{ [(1-s)^{p+1} +s^{p+1}]^{\frac2{p+1}}}
+2 \g (s){\bf M}\e_{\rho}+o(\e_{\rho})
\right),
\]
where
\[
\g (s)=\frac{(1-s)s}{ [(1-s)^{p+1} +s^{p+1}]^{\frac2{p+1}}}
\left(
1-\frac p{p+1}\frac{(1-s)^2+s^2}{ (1-s)^{p+1} +s^{p+1}}[(1-s)^{p-1} +s^{p-1}]
\right).
\]
Since $\g (1/2)<0$, there exists $\mathcal{I}_\frac 12$, neighborhood of $1/2$, such that $\g (s)<c<0$ for all $t\in \mathcal{I}_\frac 12$. Therefore, for $\rho$ 
large enough,
\[
\max\left\{
\frac{N_\rho^\l (z,s)}{(D_\rho^\l (z,s))^{\frac2{p+1}}}\ 
 \Big\vert \ z\in \Sigma, s \in \mathcal{I}_\frac 12
\right\}
\le \frac{2^\frac{p-1}{p+1}{\bf M}+2 c{\bf M}\e_{\rho}+o(\e_{\rho})}{b_\infty^{\frac2{p+1}}}
<2^\frac{p-1}{p+1}b_\infty^{-\frac2{p+1}}{\bf M}.
\]
On the other hand,
\begin{multline*}
\lim_{\rho\to +\infty}
\max\left\{
\frac{N_\rho^\l (z,s)}{(D_\rho^\l (z,s))^{\frac2{p+1}}}\ 
 \Big\vert \ z\in \Sigma, s \in [0,1]\setminus\mathcal{I}_\frac 12
\right\}
\\
\qquad \qquad
\le b_\infty^{-\frac2{p+1}}{\bf M}
\max\left\{
\frac{[(1-s)^2+s^2]}{ [(1-s)^{p+1} +s^{p+1}]^{\frac2{p+1}}}\ 
 \Big\vert \  s \in [0,1]\setminus\mathcal{I}_\frac 12
\right\}
<2^\frac{p-1}{p+1}b_\infty^{-\frac2{p+1}}{\bf M}.
\end{multline*}
Hence for $\rho $ sufficiently large,
\[
\max_{\Sigma\times[0,1]}\frac{N_\rho^\l (z,s)}{(D_\rho^\l (z,s))^{\frac2{p+1}}}<2^\frac{p-1}{p+1}b_\infty^{-\frac2{p+1}}{\bf M},
\]
so, recalling \eqref{bfm}, we conclude that
\[
\mathcal{T}_\rho^\l =
\max_{\Sigma\times[0,1]}
\Il(\Psi_\rho(z,s))
<\dfrac{p-1}{2(p+1)}
\left[2^\frac{p-1}{p+1}b_\infty^{-\frac2{p+1}}{\bf M}\right]^{\frac{p+1}{p-1}}
=2m_\infty.
\]
\end{proof}

\begin{lemma}\label{le:max<B}
Let assumptions of Lemma \ref{le:mB} hold. Then for large $\rho$,
\[
\mathcal{S}_\rho^\l :=
\max_{\Sigma}
\Il(\Psi_\rho(z,0))
<\mathcal{B}_0^\l.
\]
\end{lemma}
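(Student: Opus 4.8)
The goal is to show that on the boundary slice $s=0$, the functional $\Il(\Psi_\rho(z,0))$ stays strictly below the barycenter-constrained infimum $\mathcal{B}_0^\l$, for $\rho$ large. Since $\Psi_\rho(z,0) = t_{z,0}^\l \bw_{\rho z}$ is just the Nehari projection of a single translated ``bubble'', I expect its energy to be close to $m_\infty$, while Lemma~\ref{le:mB} guarantees $\mathcal{B}_0^\l > m_\infty$ strictly. So the plan is to prove that $\mathcal{S}_\rho^\l \to m_\infty$ as $\rho \to +\infty$, and then invoke Lemma~\ref{le:mB} to close the gap.

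First I would note that on $s=0$ the map reduces to $\Psi_\rho(z,0) = t_{z,0}^\l \bw_{\rho z}$, where $t_{z,0}^\l$ is the projection coefficient. Using the Rayleigh-quotient formula for $\Il$ on the Nehari manifold (the same representation exploited in Lemma~\ref{le:M2m}), I would write
\[
\Il(\Psi_\rho(z,0))=\frac{p-1}{2(p+1)}
\left[
\frac{\|\bw_{\rho z}\|^2+\l \irn a(x)\bw_{\rho z}^2}
{\left(\irn (b_\infty+b(x))|\bw_{\rho z}|^{p+1}\right)^{\frac{2}{p+1}}}
\right]^{\frac{p+1}{p-1}}.
\]
Then I would estimate each ingredient uniformly for $z\in\Sigma$ (so $|z|=1$ and $\rho z$ ranges over the sphere of radius $\rho$). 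Since $\|\bw_{\rho z}\|^2={\bf M}$ is translation-invariant, the numerator is ${\bf M}+\l\irn a(x)\bw_{\rho z}^2$, and by the decay of $\bw$ together with $a\in L^1$ via Lemma~\ref{riccardo} (as in the derivation of \eqref{min2a}), the integral $\irn a(x)\bw_{\rho z}^2$ tends to $0$ uniformly in $z$. For the denominator, $b_\infty\|\bw_{\rho z}\|_{p+1}^{p+1}$ is constant while $\irn b(x)|\bw_{\rho z}|^{p+1}\to 0$ uniformly in $z$ by the same decay argument together with $b\in L^\infty$ and $b(x)\to 0$.

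Combining these, $\dis\max_{z\in\Sigma}\Il(\Psi_\rho(z,0))\to \frac{p-1}{2(p+1)}\left[b_\infty^{-\frac{2}{p+1}}{\bf M}\right]^{\frac{p+1}{p-1}}=m_\infty$ as $\rho\to+\infty$, where the last identity follows from \eqref{bfm}. Hence $\mathcal{S}_\rho^\l \to m_\infty$. By Lemma~\ref{le:mB}, under the stated hypotheses $m_\infty<\mathcal{B}_0^\l$ strictly, so fixing any $\rho$ large enough that $\mathcal{S}_\rho^\l < \frac{1}{2}(m_\infty+\mathcal{B}_0^\l)<\mathcal{B}_0^\l$ gives the claim.

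The main obstacle I anticipate is making the convergence $\mathcal{S}_\rho^\l \to m_\infty$ genuinely uniform over $z\in\Sigma$: the estimates on $\irn a(x)\bw_{\rho z}^2$ and $\irn b(x)|\bw_{\rho z}|^{p+1}$ must hold with constants independent of the direction $z$. This should follow because $\Sigma$ is compact and the bubble is concentrated near $\rho z$ with $|\rho z|=2\rho\to\infty$ uniformly (note $|z|=2$ on $\Sigma=\de B_2(\xi)$, so in fact $|\rho z|\ge\rho$ away from zero uniformly), so the relevant integrals decay uniformly; the continuity and compactness package the pointwise decay into a uniform one. Once uniformity is secured, the strict inequality is immediate from Lemma~\ref{le:mB}.
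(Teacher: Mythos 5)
Your strategy is the same as the paper's: on the slice $s=0$ write $\Il(\Psi_\rho(z,0))$ as the Rayleigh quotient, show that it equals $m_\infty+o_\rho(1)$ uniformly for $z\in\Sigma$ (using $\|\bw_{\rho z}\|^2={\bf M}$, $\|\bw_{\rho z}\|_{p+1}=1$ and \eqref{bfm}), and conclude by the strict inequality $m_\infty<\mathcal{B}_0^\l$ of Lemma \ref{le:mB}. The structure, the limit value, and the identification of uniformity in $z$ as the point to check are all correct.

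There is, however, one step whose justification is not available under the stated hypotheses. Lemma \ref{le:max<B} assumes only the hypotheses of Lemma \ref{le:mB}, namely \eqref{it:h1}, \eqref{it:h2} and that $m_\l=m_\infty$ is not achieved; in particular $a$ is only in $L^{N/2}(\RN)$, and for $N\ge3$ this does not give $a\in L^1(\RN)$ (e.g. $a(x)=(1+|x|)^{-5/2}$ in $\R^3$ satisfies \eqref{it:h1} but is not integrable). So you cannot handle $\irn a(x)\bw_{\rho z}^2$ ``as in the derivation of \eqref{min2a}'' via Lemma \ref{riccardo}: that derivation used \eqref{ABforte1}, and Lemma \ref{le:M2m} uses \eqref{it:h3}, and it is exactly those extra integrability assumptions, absent here, that make Lemma \ref{riccardo} applicable. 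The fix stays within \eqref{it:h1}: for $R>0$ split and use H\"older,
\[
\irn a(x)\bw_{\rho z}^2
\le \|a\|_{N/2}\left(\int_{B_R}|\bw_{\rho z}|^{2^*}\right)^{2/2^*}
+\Big(\sup_{\RN\setminus B_R}a\Big)\,\|\bw\|_2^2,
\]
choose $R$ large so the second term is small (since $a(x)\to0$ as $|x|\to\infty$), then send $\rho\to+\infty$: the first term vanishes uniformly in $z$ because the bubble is centered at $\rho z$ with $|\rho z|\ge\rho$ (for $N=2$ one has $L^{N/2}=L^1$ and your original argument works). Your treatment of $\irn b(x)|\bw_{\rho z}|^{p+1}$ is fine as written, since there only $b\in L^\infty$ and $b\to0$ are used. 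Finally, a small geometric slip: on $\Sigma=\de B_2(\xi)$ with $|\xi|=1$, $|z|$ is neither $1$ nor $2$ but ranges over $[1,3]$; the only fact needed, and the one the paper itself uses, is $|z|\ge1$.
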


\begin{proof}
By \eqref{bwdef}, \eqref{bw} and \eqref{bfm}, for $\rho$ sufficiently large, we have  
\begin{align*}
\Il(\Psi_\rho(z,0))
&=\dfrac{p-1}{2(p+1)}
\left[
\dfrac{\dis \|\bw_{\rho z}\|^2+\l \irn a(x)\bw_{\rho z}^2}
{\dis \left( \irn (b_\infty+b(x))|\bw_{\rho z}|^{p+1}\right)^{\frac{2}{p+1}}}\right]^{\frac{p+1}{p-1}}
\\
&=\dfrac{p-1}{2(p+1)}
\left[
b_\infty^{-\frac2{p+1}}{\bf M}+o_\rho(1)\right]^{\frac{p+1}{p-1}}=m_\infty+o_\rho(1).
\end{align*}
Then the conclusion follows by Lemma \ref{le:mB}.
\end{proof}

\begin{proof}[Proof of Theorem \ref{th:seconda}]
Let $\l ^*$ be the number defined in \eqref{l*}. 
By Proposition \ref{pr:=}, $\l ^*\in \R^+$. Then, if $\l <\l ^*$, the relation  $m_\l <m_\infty$ holds and  $m_\l$ is achieved by  
Proposition \ref{pr:22'}.
\\
Let us suppose, now,  $\l >\l^*$. In this case, Propositions \ref{pr:mbar} and \ref{pr:l*} imply that $m_\l =m_\infty$, $m_\l $ is not achieved and 
the problem cannot be solved by minimization. However we are now going to prove that a solution of \eqref{eql} having energy greater than $m_\infty$ exists,
for all $\l>\l^*$.
\\
For any $c\in \R$, we set $I_\l^{c}:=\{u\in \Ne_\l : \Il(u)\le c\}$.
\\
By Lemmas \ref{le:B<=max}, \ref{le:M2m} and \ref{le:max<B}, the following chain of inequality holds
\[
m_\infty\le \mathcal{S}_\rho^\l<\mathcal{B}_0^\l\le \mathcal{T}_\rho^\l<2m_\infty.
\]
The argument is then completed showing that there exists $c^*\in [\mathcal{B}_0^\l, \mathcal{T}_\rho^\l]$ which is a critical level  of ${I_\l}_{\vert \Ne_\l}$.
In fact, in the opposite case, since $[\mathcal{B}_0^\l,\mathcal{T}_\rho^\l]\subset (m_\infty,2m_\infty)$, a positive number $\d >0$ and a continuous function 
$\eta:I_\l^{\mathcal{T}_\rho^\l} \to I_\l^{\mathcal{B}_0^\l-\d} $ can be found such that $\mathcal{B}_0^\l -\d >\mathcal{S}_\rho^\l$ and $\eta(u)=u$ for all
$u\in I_\l^{\mathcal{B}_0^\l-\d} $. Then
\beq\label{notin}
0\notin \b \circ \eta \circ \Psi_\rho(\Sigma\times [0,1]).
\eeq
On the other hand, since $\Psi_\rho(\Sigma\times\{0\})\subset I_\l^{\mathcal{S}_\rho^\l}$,  $\b \circ\eta \circ \Psi_\rho(\Sigma\times \{0\})$ is homotopically equivalent
to $\rho \Sigma$ in $\RN \setminus \{0\}$ and this implies
\[
0\in \b \circ \eta \circ \Psi_\rho(\Sigma\times [0,1]),
\]
which contradicts \eqref{notin}.
\\
When $\l =\l^* $, $m_{\l ^*}=m_\infty$ and either it is achieved or, if not, the arguments used for $\l>\l^*$ apply. Thus for $\l =\l^* $ too a solution of \eqref{eql} exists.
\\
Finally, since for any $\l \in \R^+$, we find a solution $u_\l$ of \eqref{eql} with $\Il(u_\l)<2m_\infty$, by Lemma \ref{le:24ter} we can assert that $u_\l$ does 
not change sign, so we can assume that it is positive.
\end{proof}

\begin{proof}[Proof of Theorem \ref{th:rad2}]
Set
\beq\label{531}
m_{\l,{\rm r}}:=\inf\{\Il(u): u\in \Ne_\l\cap \Hr\}
\eeq
clearly, for all $\l \ge 0$,
\[
0<m_\l \le m_{\l,{\rm r}}.
\]
Since $\Hr$ embeds compactly in $L^{p+1}(\RN)$, the infimum in \eqref{531} is actually a minimum to which there corresponds a nontrivial solution
of \eqref{eql}, by the Palais symmetric criticality principle.
Now, let us show that 
\[
\lim_{\l \to +\infty}m_{\l,{\rm r}}=+\infty.
\]
Let us observe that, arguing as in Proposition \ref{pr:mono}, we can show that the map $\l\in \R^+ \mapsto m_{\l,{\rm r}}$ is monotone non-decreasing.
\\
Assume, now,  by contradiction that a diverging sequence of numbers $(\l_n)_n$, a sequence of functions $(u_n)_n$ 
and a positive constant $C$ exist such that, for all $n\in \N$, $u_n \in \Ne_{\ln}\cap \Hr$ and $\Iln(u_n)=m_{\l_n,{\rm r}}\le C$.
Thus, by Lemmas \ref{le:un<c1} and \ref{le:un<c2}, $(u_n)_n$ is bounded in $\H$; furthermore
a positive constant $c$ and a sequence $(y_n)_n\subset \RN$, with $|y_n|\to +\infty$, must exist for which 
\beq\label{541}
\int_{B_1(y_n)}u_n^2\ge c>0.
\eeq
This last fact brings to a contradiction because, by the radial symmetry if $u_n$, \eqref{541} implies that $\|u_n\|\to +\infty$, as $n\to +\infty$.
\\
Therefore we get the conclusion just observing that for the solutions $u_\l $ whose existence has been stated in 
Theorems  \ref{th:<} and \ref{th:seconda}, whatever $\l \in \R^+$, the relation  $\Il(u_\l)< 2m_\infty$ holds. So, being  $m_{\l,{\rm r}}>2m_\infty$, for large $\l>0$, 
the existence of at least two distinct positive solutions of 
\eqref{eql} follows.
\end{proof}

%
%
%
%
%
%
%
%
%
%

\end{document}